\setlist[enumerate,1]{label={$(\roman*)$},leftmargin=*}
\newtheorem{thm}{Theorem}[section]
\newtheorem{lem}[thm]{Lemma}
\newtheorem{prop}[thm]{Proposition}
\theoremstyle{definition}
\newtheorem{defn}[thm]{Definition}
\newtheorem*{exmp}{Example}
\newtheorem{remark}[thm]{Remark}
\newtheorem*{question*}{Question}
\numberwithin{thm}{section}
\numberwithin{equation}{section}
\newcommand{\dotifempty} [1]{\ifthenelse{\isempty{#1}}
                          	{\cdot}%
                          	{#1}}
\DeclarePairedDelimiter\absolute{\lvert}{\rvert}%
\DeclarePairedDelimiter\braces{\{}{\}}
\newcommand{\abs}[1]{\absolute{\dotifempty{#1}}}
\newcommand{\map}[0]{\rightarrow}
\newcommand{\xmap}[0]{\xrightarrow}
\renewcommand{\restriction}[2]{{
  \left.\kern-\nulldelimiterspace 
  #1 
  \vphantom{\big|} 
  \right|_{#2} 
  }}
\title{Maximal stable lattices in representations over discretely valued fields}
\author{Amit Ophir and Ariel Weiss}
\address{Amit Ophir, Department of Mathematics, University of California, San Diego.\vspace*{-3pt}}
\email{aophir@ucsd.edu}
\address{Ariel Weiss, Department of Mathematics, The Ohio State University.\vspace*{-3pt}}
\email{weiss.742@osu.edu}
 \subjclass[2020]{20G25, (11F70 20G05, 51E24)}
 \keywords{Ribet's Lemma, representations over discretely valued fields, stable lattices}
\begin{document}

\maketitle

\begin{abstract}
    Let $\rho\:G\to \GL_n(K)$ be an continuous irreducible representation of a compact group over a complete discretely valued field $K$.
    Let $W_i,W_j$ be two irreducible subrepresentations of $\overline{\rho}^{ss}$, the semisimplification of the residual representation.
    We study the structure of the $G$-stable lattices $\Lambda\sub K^n$ with a view to understanding the question of when $\rho$ realises a non-split extension of $W_i$ by $W_j$.
    
    In particular, we introduce the notion of a \emph{maximal} $G$-stable lattice and prove that any non-split extension of $W_i$ by $W_j$ that can be realised by $\rho$ can also be realised by a maximal lattice. As applications, we give a new proof and a strengthening of Bella\"iche's generalisation of Ribet's Lemma \cite{Bellaiche-apropos}, which assures the abundancy of non-split extensions that can be realised by $\rho$. On the other hand, we also show that, if the representations $W_i, W_j$ occur with multiplicity one in $\orho^{ss}$, then $\rho$ can realise \emph{at most} one non-split extension of $W_i$ by $W_j$.
\end{abstract}

\section{Introduction}


    

In his celebrated 1976 article \cite{Ribet-herbrand}, Ribet pioneered a method to construct interesting extensions of number fields by studying stable lattices inside suitably constructed representations.
A key component of his method is the so-called \emph{Ribet's Lemma}. Let $K$ be a complete discretely valued field with ring of integers $\O_K$, uniformiser $\pi$,  and residue field $\F$. Let $G$ be a compact group and let $\rho\:G\to \GL_2(K)$ be a continuous irreducible representation that is \emph{residually reducible}: there exist characters $\chi_1, \chi_2\:G\to \F\t$ such that the semisimplification of the residual representation $\orho^{ss}$ splits as $\orho^{ss} = \chi_1\+\chi_2$. Ribet's Lemma \cite{Ribet-herbrand}*{Prop.\ 2.1} asserts that there is a $\rho(G)$-stable lattice $\Lambda\sub K^2$ such that $\Lambda/\pi\Lambda$ is a non-split extension of $\chi_1$ by $\chi_2$. 

More generally, suppose that $\rho\:G\to \GL(V)$ is a finite-dimensional continuous irreducible representation over $K$, that $W_i, W_j$ are irreducible subrepresentations of $\orho^{ss}$, and that $U$ is a non-split extension of $W_i$ by $W_j$. We say that $\rho$ \emph{realises} $U$ if there is a $\rho(G)$-stable lattice $\Lambda\sub V$ such that $U$ is a subquotient of $\Lambda/\pi\Lambda$.

In this paper, we study the structure of the $\rho(G)$-stable lattices of a fixed representation $\rho$, with a view to understanding the non-split extensions that it realises. In particular, we introduce the notion of a \emph{maximal stable lattice}, and prove that much of the structure of the $\rho(G)$-stable lattices can be obtained from the maximal lattices.

\subsection{Maximal stable lattices}

If $\Lambda$ is a $\rho(G)$-stable lattice and $v\in V$ is nonzero, then we can always replace $\Lambda$ with a homothetic lattice $\Lambda'$ such that $v\in \Lambda'\setminus \pi\ii\Lambda'$: we say that $\Lambda'$ is \emph{normalised} at $v$. Let $\LL_v$ be the set of all $\rho(G)$-stable lattices, normalised at $v$, and equip $\LL_v$ with the partial order given by inclusion.

\begin{defn}
    We say that $\Lambda\in\LL_v$ is maximal with respect to $v$ if it is maximal in $\LL_v$. We say that a $\rho(G)$-stable lattice $\Lambda$ is \emph{maximal} if it is maximal with respect to some nonzero $v\in V$.
\end{defn}

In \Cref{thm_maximal_equivalence}, we show that a $\rho(G)$-stable lattice $\Lambda$ is maximal if and only if $\soc(\Lambda/\pi\Lambda)$ is irreducible. Writing $\orho^{ss} = \bigoplus_{i=1}^rW_i^{m_i}$, where the $W_i$'s are distinct irreducible representations of $G$ over $\F$, it follows that if $\Lambda$ is a maximal stable lattice and $\soc(\Lambda/\pi\Lambda)\simeq W_i$, then $\Lambda/\pi\Lambda$ always contains a subrepresentation that is a non-split extension $W_i$ by some $W_j$.

In \Cref{lem_uniqueness}, we prove that there are an abundance of maximal lattices: for each $i = 1, \ldots, r$, there is a maximal lattice $\Lambda_i$ such that $\soc(\Lambda_i/\pi\Lambda_i)\simeq W_i$. As a consequence, in \Cref{thm_ext_classes_by_maximal_lattices}, we deduce that if $U$ is a non-split extension that can be realised by $\rho$, then $U$ is isomorphic to a subrepresentation of $\Lambda/\pi\Lambda$ for some maximal lattice $\Lambda$. On the other hand, we prove that there aren't too many maximal lattices. For example if $m_i = 1$, then we show in \Cref{lem_uniqueness} that the maximal lattice $\Lambda_i$ is unique.

\subsection{Generalisations of Ribet's Lemma}

Ribet's Lemma has been generalised to higher-dimensional representations, to mod $\pi^n$ congruences, and to representations over more general rings \cites{Urban-ribet-lemma, Bellaiche-apropos,Bellaiche-Graftieaux,Brown, Bellaiche-book, Bellaiche-arbres, ophirweiss, dasgupta-ribet}. These generalisations are crucial components of proofs of Iwasawa main conjectures, the Brumer--Stark conjecture, and cases of the Bloch--Kato conjecture \cites{Wiles_Iwasawa, Urban,Bellaiche-book, Skinner-Urban, Dasgupta-BS, dasgupta-BS-2}. 

Let $\rho\:G\to \GL(V)$ be a continuous irreducible representation as before, and write $\orho^{ss} = \bigoplus_{i=1}^rW_i^{m_i}$. In \cite{Bellaiche-apropos}, Bella\"iche defines the following graph, which captures when $\rho$ realises an extension of $W_i$ by $W_j$.

\begin{defn}[Bella\"iche's graph of extensions]
    Let $\Gamma_\rho$ be the directed graph whose vertices are the irreducible representations $W_1, \ldots, W_r$, and where there is a directed edge from $W_j$ to $W_i$ if $\rho$ realises a non-split extension of $W_i$ by $W_j$, i.e.\ a nonzero element of  $\Ext_{\F[G]}(W_i, W_j)$.
\end{defn}

In Ribet's setting \cite{Ribet-herbrand}, $r = 2$, and Ribet's Lemma states that there are extensions of $W_1$ by $W_2$ and of $W_2$ by $W_1$ that are realised by $\rho$. Equivalently, $\Gamma_\rho$ is connected as a directed graph. In \cite{Bellaiche-apropos}*{Thm.\ 1}, Bella\"iche proves that the graph $\Gamma_\rho$ is connected as a directed graph in general.

As a first application of our results on maximal vertices, we give a completely new proof of this result. As well as being simpler than Bella\"iche's proof, our methods are more precise in that they provide an upper bound on the directed distance between any two vertices $W_i$ and $W_j$. 

\begin{thm}\label{thm:bellaiche-generalisation}
    For each $i = 1, \ldots r$, there exists a maximal lattice $\Lambda_i$ with $\soc(\Lambda_i/\pi\Lambda_i)\simeq W_i$. For each $j = 1, \ldots r$, suppose that $W_j$ is in the $m$-th level of the socle filtration of $\Lambda_{i}/\pi\Lambda_{i}$. Then there is a directed path in $\Gamma_\rho$ from $W_i$ to $W_j$ of length $m-1$. In particular, $\Gamma_\rho$ is connected.
\end{thm}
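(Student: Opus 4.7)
The existence of $\Lambda_i$ is \Cref{lem_uniqueness}; the substantive content is the path statement, which I would prove by induction on $m$. The base case $m=1$ is immediate from \Cref{thm_maximal_equivalence}: maximality of $\Lambda_i$ forces $\soc(\Lambda_i/\pi\Lambda_i) \simeq W_i$, so the only irreducible at level $1$ is $W_i$ itself and the length-$0$ path works.

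For $m \ge 2$, write $M = \Lambda_i/\pi\Lambda_i$ and $Q_k := M/\soc^{k-1}(M)$, so that the $k$-th level equals $\soc(Q_k)$ and $Q_{m-1}/\soc(Q_{m-1}) = Q_m$. Given a chosen copy of $W_j \subset \soc(Q_m)$, the plan is to pull it back along $Q_{m-1}\twoheadrightarrow Q_m$ to a submodule $N\subset Q_{m-1}$ satisfying $\soc Q_{m-1} \subset N$ and $N/\soc Q_{m-1}\simeq W_j$. Since $\soc N = N\cap \soc Q_{m-1}$ for any submodule $N$, we obtain $\soc N = \soc Q_{m-1}$, so that $N$ is an extension
\[
0 \to \soc Q_{m-1} \to N \to W_j \to 0.
\]
This extension is non-split: any splitting would embed $W_j$ into $\soc N = \soc Q_{m-1}$, collapsing its image in $Q_m$ to zero and contradicting the choice of $W_j$. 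Writing $\soc Q_{m-1} = \bigoplus_k W_k^{a_k}$, the additive decomposition $\Ext^1_{\F[G]}(W_j,\soc Q_{m-1}) = \bigoplus_k \Ext^1_{\F[G]}(W_j,W_k)^{a_k}$ shows that some component of $[N]$ is non-zero; pushing $N$ out along the corresponding projection produces a non-split extension $0 \to W_k \to E \to W_j \to 0$ with $E$ a quotient of $N$. Hence $E$ is a subquotient of $\Lambda_i/\pi\Lambda_i$, so $\rho$ realises $E$, giving an edge $W_k \to W_j$ in $\Gamma_\rho$. Since $W_k \subset \soc Q_{m-1}$ lies in the $(m-1)$-th level, the inductive hypothesis provides a path from $W_i$ to $W_k$ of length $m-2$; appending the new edge yields the required path of length $m-1$.

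Connectedness of $\Gamma_\rho$ then follows: every irreducible constituent $W_j$ of $\orho^{ss}$ is a composition factor of $\Lambda_i/\pi\Lambda_i$ and thus appears in some level of its socle filtration, so the inductive construction produces a directed path $W_i \to \cdots \to W_j$, and letting $i$ vary gives paths in both directions between any two vertices. The step I expect to require the most care is the extraction of the single edge from $N$: one must confirm that the pushout of a non-split extension by $\bigoplus_k W_k^{a_k}$ remains non-split along at least one projection, and this is the only point at which the semisimplicity of $\soc Q_{m-1}$ is essential.
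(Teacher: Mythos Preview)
Your proposal is correct and follows essentially the same route as the paper: the paper isolates your inductive step as a separate lemma (\Cref{lem_socle_filtration}), proving that a constituent in socle level $m$ admits a non-split length-two extension by some constituent in level $m-1$, and then runs the same induction. The only cosmetic difference is that the paper extracts the single edge by embedding $N$ into $\bigoplus_j N/U_j'$ and observing that some factor must fail to be semisimple, whereas you phrase the identical step via the direct-sum decomposition of $\Ext^1$ and a pushout---these are the same argument, and your flagged ``care point'' is exactly what the paper's injectivity observation handles.
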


\subsection{Uniqueness of extensions realised by $\rho$}

In light of Ribet's Lemma, which attempts to use $\rho$ to construct interesting non-split extensions, it is natural to ask whether a single $\rho$ can realise multiple distinct non-split extensions of $W_i$ by $W_j$. As an application of \Cref{thm_ext_classes_by_maximal_lattices}, we show that, if $W_i$ and $W_j$ both occur with multiplicity $1$ in $\orho^{ss}$, then $\rho$ can realise at most one non-split extension of $W_i$ by $W_j$.

\begin{thm}\label{thm:uniqueness}
    Let $1 \le i, j \le r$ and suppose that $m_i = m_j=1$, i.e.\ that $W_i$ and $W_j$ occur with multiplicity one in the decomposition of $\orho^{ss}$. Then, up to equivalence,\footnote{See \Cref{def:equivalence} for the notion of equivalence of extensions.} $\rho$ realises at most one non-split extension of $W_i$ by $W_j$.
\end{thm}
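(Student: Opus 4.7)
The plan is to combine \Cref{thm_ext_classes_by_maximal_lattices} with \Cref{lem_uniqueness} to reduce the problem to counting certain subrepresentations of a single mod-$\pi$ reduction. First, observe that the case $i=j$ is vacuous: a non-split self-extension $0\to W_i\to U\to W_i\to 0$ would contain two composition factors isomorphic to $W_i$, whereas $m_i=1$ forces every subquotient of any $\Lambda/\pi\Lambda$ to contain at most one. So we may assume $i\ne j$, and hence $W_i\not\cong W_j$ (the $W_k$ being pairwise distinct).

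Let $U$ be a non-split extension $0\to W_j\to U\to W_i\to 0$ realised by $\rho$. Since $W_i\not\cong W_j$ and $U$ is non-split, $\soc(U)=W_j$. By \Cref{thm_ext_classes_by_maximal_lattices}, $U$ embeds as a subrepresentation of $\Lambda/\pi\Lambda$ for some maximal lattice $\Lambda$. By \Cref{thm_maximal_equivalence}, $\soc(\Lambda/\pi\Lambda)$ is irreducible, and since it contains $W_j=\soc(U)$ it must equal $W_j$. Applying \Cref{lem_uniqueness} with $m_j=1$ then identifies $\Lambda$ uniquely up to homothety with the lattice $\Lambda_j$. Writing $M=\Lambda_j/\pi\Lambda_j$, we conclude that every non-split extension of $W_i$ by $W_j$ realised by $\rho$ embeds as a subrepresentation of this single module $M$.

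The heart of the argument is to show that $M$ has \emph{at most one} subrepresentation isomorphic to such an extension. Because $m_j=1$, $M$ contains a unique submodule isomorphic to $W_j$: two distinct irreducible submodules with nonzero intersection must coincide, while two with trivial intersection would direct-sum to a submodule with two composition factors isomorphic to $W_j$, contradicting multiplicity one. This unique copy is necessarily $\soc(M)$. Hence any subrepresentation $V\subseteq M$ realising such an extension must contain $\soc(M)$, and $V/\soc(M)\subseteq M/\soc(M)$ is a submodule isomorphic to $W_i$. Since $W_i\not\cong W_j$, the multiplicity of $W_i$ as a composition factor of $M/\soc(M)$ remains $m_i=1$, so the same argument pins down $V/\soc(M)$, and therefore $V$, uniquely.

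Two realised non-split extensions $U_1,U_2$ therefore both embed onto the same subrepresentation $V\subseteq M$ and are each isomorphic, as extensions, to the canonical sequence $0\to \soc(M)\to V\to V/\soc(M)\to 0$. The main subtlety lies in passing from this common subrepresentation structure to equivalence of extensions in the sense of \Cref{def:equivalence}: the two embeddings $U_k\hookrightarrow V$ may differ by automorphisms of $W_i$ and $W_j$ coming from $\End_G(W_i)^\times\times\End_G(W_j)^\times$, and one must check that these are absorbed by the adopted notion of equivalence. This verification should be routine once the definition is unpacked, since the subobject and quotient in $V$ are forced to be $\soc(M)$ and $V/\soc(M)$, respectively.
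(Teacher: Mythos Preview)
Your proof is correct and follows essentially the same approach as the paper's: both arguments use \Cref{thm_ext_classes_by_maximal_lattices} to embed any realised extension into $\Lambda_x/\pi\Lambda_x$ for a maximal lattice, invoke \Cref{thm_maximal_equivalence} to see $\soc(\Lambda_x/\pi\Lambda_x)\simeq W_j$, apply \Cref{lem_uniqueness}$(ii)$ with $m_j=1$ to force a single lattice $\Lambda_j$, and then use $m_i=1$ to pin down the subrepresentation inside $\Lambda_j/\pi\Lambda_j$. Your treatment is slightly more explicit---you separately dispose of the vacuous case $i=j$, and you spell out why $M$ contains a unique copy of $W_j$ and why $V/\soc(M)$ is determined---whereas the paper compresses these into the single line ``since $W_i$ occurs with multiplicity one \ldots\ it follows that $V=V'$''; but the substance is the same, and your hedged final paragraph on equivalence matches the paper's equally brief closing sentence.
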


\subsection{The geometry of maximal lattices}

For a fixed representation $\rho$, the set $\X(\rho)$ of homothety classes of $\rho(G)$-stable lattices can be naturally equipped with a simplicial complex structure by viewing $\X(\rho)$ as a subcomplex of the Bruhat--Tits building for $\PGL(V)$. This perspective, first introduced by Serre, has been studied in \cites{Bellaiche-apropos, Bellaiche-Graftieaux,Bellaiche-arbres, ophirweiss, suh-1, suh-2, suh2023stable}.

In particular, in \cite{suh2023stable}, Suh studies the geometry of the complex $\X(\rho)$ in terms of its extremal vertices, which, algebraically, correspond to $\rho(G)$-stable lattices $\Lambda$ such that $\Lambda/\pi\Lambda$ is indecomposable \cite{suh2023stable}*{Prop.\ 1.1.1}. In \cite{suh2023stable}*{Thm.\ 1.1.2}, Suh proves that $\X(\rho)$ is the simplicial convex hull of the set of extremal vertices, and proves that, if $\orho^{ss} \simeq\bigoplus_{i=1}^rW_i^{m_i}$, then the number of extremal vertices is bounded below by $\min\{3, \sum_{i=1}^rm_i\}$.

In \Cref{thm_maximal_equivalence}, we prove that $\Lambda$ is a maximal lattice if and only if $\soc(\Lambda/\pi\Lambda)$ is irreducible. In particular, if $\Lambda$ is maximal, then $\Lambda/\pi\Lambda$ is indecomposable, so the homethety class of $\Lambda$ is an extremal vertex of $\X(\rho)$. We use this observation in \Cref{sec:suh} to strengthen Suh's results. First, in \Cref{thm:trop-convex}, we show that $\X(\rho)$ is the tropical\footnote{See \Cref{def:tropical} for this notion. We expect that any set that is simplically convex is also tropically convex, so that $\X(\rho)$ is also the simplicial convex hull of the set of maximal vertices, but we have not been able to find a reference for this fact in the literature.} convex hull of the set of maximal vertices, not just the set of extremal vertices, and we show that the set of maximal vertices is a minimal set whose tropical convex hull is $\X(\rho)$. Second, in \Cref{thm_maximal_vertices_lower_bound}, we show that the number of maximal vertices, and hence the number of extremal vertices, is bounded below by $\sum_{i=1}^rm_i$.

\section{Preliminaries}

Throughout this section, let $K$ be a complete discretely valued field, with ring of integers $\O_K$, uniformiser $\pi$ and residue field $\F=\O_K/\pi\O_K$. Let $V$ be a finite-dimensional vector space over $K$.

\subsection{The Bruhat--Tits building of $\PGL(V)$}

Recall that a lattice $\Lambda$ in $V$ is a free $\O_K$-submodule of $V$ of rank $\dim_K(V)$. 
Two lattices $\Lambda_1,\Lambda_2$ in $V$ are called \emph{homothetic} if there exists $\lambda\in K\t$ such that $\Lambda_2 = \lambda\Lambda_1$, and we denote by $\X(V)$ the set of homothety classes of lattices in $V$.
Given a homothety class $x\in \X(V)$, we denote by $\Lambda_x$ a choice of a lattice in $x$.

\begin{defn}[Bruhat--Tits building of $\PGL(V)$]
    The set $\X(V)$ is equipped with a structure of an abstract simplicial complex as follows:
\begin{itemize}[leftmargin=*]
    \item The vertices are the points of $\X(V)$. 
    \item A collection of $n+1$ vertices $x_0,\dots,x_n\in\X(V)$ forms an $n$-dimensional face if there exist representative lattices $\Lambda_{x_i}\in x_i$ such that
    \[\pi\Lambda_{x_0}\subsetneq\Lambda_{x_1}\subsetneq\Lambda_{x_2}\subsetneq\dots\subsetneq \Lambda_{x_n}\subsetneq\Lambda_{x_0}.\]
\end{itemize}
\end{defn}

As a simplicial complex, $\X(V)$ is isomorphic to the Bruhat--Tits building of $\PGL(V)$.
In particular, $\X(V)$ is connected and pure of dimension $d-1$, where $d=\dim_K(V)$.

\begin{defn}
    Let $\B=\braces{v_1,\dots,v_d}$ be a basis of $V$.
The \emph{apartment} associated to $\B$ is the collection of all vertices represented by lattices of the form
\[\mathrm{Span}_{\O_K}\braces{\pi^{n_1}v_1,\dots,\pi^{n_d}v_d},\]
for integers $n_1,\dots,n_d$.
\end{defn}

Apartments are connected subcomplexes of $\X(V)$, and are pure of dimension $d-1$. Any two apartments are isomorphic as simplicial complexes, and the building $\X(V)$ is covered by its apartments.

\subsection{The invariant complex of a representation}

Now let $\rho\:G\to \GL(V)$ be a representation of a group $G$. Throughout this paper, we assume that there exists at least one lattice in $V$ that is stable under the action of $G$ via $\rho$. This condition is automatically satisfied for most representations of interest. For example, it is satisfied if $G$ is compact and $\rho$ is continuous.

\begin{defn}\label{def:invariant subcomplex}
    We denote by $\X(\rho)$ the subcomplex of $\X(V)$ spanned by the vertices $x\in \X(V)$ that are represented by $\rho(G)$-invariant lattices, i.e.\ such that $\rho(G)\cdot\Lambda_x = \Lambda_x$ for some (and hence every) lattice representing $x$. 
\end{defn}


The Bruhat--Tits building of $\PGL(V)$ has a geometric realisation, in which any apartment becomes a Euclidean space of dimension $d-1$, and there is a well-defined notion of convexity.

\begin{prop}[\cite{Bellaiche-apropos}*{Prop.\ 3.1.3}]\label{thm:convex}
The subcomplex $\X(\rho)$ is convex.
\end{prop}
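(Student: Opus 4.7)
The plan is to leverage the elementary but crucial observation that if $\Lambda_1$ and $\Lambda_2$ are both $\rho(G)$-stable lattices in $V$, then so are $\Lambda_1+\Lambda_2$ and $\Lambda_1\cap\Lambda_2$, since each $\rho(g)$ commutes with sums and intersections of submodules. Consequently, the collection of $\rho(G)$-stable lattices is closed under precisely the algebraic operations from which intermediate vertices on a geodesic in the Bruhat--Tits building can be built.

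It suffices to show that for any two vertices $x_0,x_1\in\X(\rho)$ and any apartment $A$ containing both, every vertex of $A$ on the geodesic segment from $x_0$ to $x_1$ lies in $\X(\rho)$. I would choose $\rho(G)$-stable representatives $\Lambda_0$ of $x_0$ and $\Lambda_1$ of $x_1$, and after multiplying $\Lambda_1$ by a power of $\pi$ arrange $\pi^N\Lambda_0\subsetneq \Lambda_1\subseteq\Lambda_0$ with $N$ minimal. Applying the elementary divisor theorem to $\Lambda_0/\Lambda_1$ yields a basis $e_1,\ldots,e_d$ of $\Lambda_0$ and integers $0=a_1\leq\cdots\leq a_d=N$ with $\Lambda_1=\bigoplus_{i=1}^d \O_K\pi^{a_i}e_i$; the apartment $A$ associated to this basis contains both $x_0$ and $x_1$, and every one of its vertices is represented by a lattice of the form $\Lambda_{(b)}=\bigoplus_{i=1}^d \O_K\pi^{b_i}e_i$ for an integer tuple $b=(b_1,\ldots,b_d)$.

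The heart of the argument is the pair of identities
\[
\pi^k\Lambda_0+\Lambda_1=\bigoplus_{i=1}^d \O_K\pi^{\min(k,a_i)}e_i,\qquad \pi^k\Lambda_0\cap\Lambda_1=\bigoplus_{i=1}^d \O_K\pi^{\max(k,a_i)}e_i,
\]
which realise two natural stepwise chains of $\rho(G)$-stable lattices interpolating between $\Lambda_0$ and $\Lambda_1$. To finish, one identifies, combinatorially, the tuples $b$ corresponding to vertices on the geodesic $[x_0,x_1]$ in $A$, and exhibits each such $\Lambda_{(b)}$ as an iterated combination of sums, intersections, and $\pi$-scalings of $\Lambda_0$ and $\Lambda_1$---for instance, by repeatedly applying the two identities above to lattices produced at previous stages. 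Each resulting $\Lambda_{(b)}$ is then manifestly $\rho(G)$-stable, so its homothety class lies in $\X(\rho)$.

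I expect the main obstacle to be the combinatorial step: precisely identifying the vertices on the geodesic in terms of tuples $(b_i)$ depends on the Euclidean geometry of the apartment modulo the diagonal $\pi$-homothety, and one must verify that every such tuple is reached by the sum/intersection procedure. The algebraic input is entirely encoded in the opening observation; the remaining work is essentially geometric and combinatorial.
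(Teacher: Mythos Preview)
The paper does not prove this proposition itself; it is cited from Bella\"iche. The argument there is quite different from yours: one uses that the geometric realisation of $\X(V)$ is a CAT(0) space on which $\rho(G)$ acts by type-preserving simplicial isometries (the standing hypothesis that some lattice is stable forces $\det\rho(g)\in\O_K^\times$ for all $g$), and that $\X(\rho)$ is precisely the fixed-point set of this action. Convexity then follows from the general fact that the fixed-point set of a group of isometries of a CAT(0) space is convex. No apartment-by-apartment combinatorics is needed.

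Your hands-on approach via sums and intersections is natural, but there is a gap beyond the one you already flag. The reduction ``it suffices to show that every vertex of $A$ on the geodesic lies in $\X(\rho)$'' is not correct: convexity requires that every \emph{point} of $[x_0x_1]$ lie in the subcomplex, hence that every simplex meeting the geodesic have all of its vertices in $\X(\rho)$, and those vertices need not themselves lie on the segment. For instance, with $d=3$ and $(a_1,a_2,a_3)=(0,1,3)$ the straight line $t\mapsto(0,t,3t)$ contains no intermediate lattice point whatsoever, yet it crosses four chambers, and one must check that vertices such as $(0,0,1)$, $(0,1,1)$, $(0,0,2)$, $(0,1,2)$ represent stable lattices. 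Thus the statement that actually needs proving is that the closure of $\{\pi^k\Lambda_0,\pi^k\Lambda_1:k\in\mathbb{Z}\}$ under sums and intersections contains every vertex of every simplex meeting $[x_0x_1]$. This is believable (and in the example above it checks out), but it is a genuine combinatorial lemma about the $\tilde A_{d-1}$ apartment that your proposal neither formulates precisely nor proves.
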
 

There is a unique geodesic between any two points $x, y\in \X(V)$, which we denote by $[xy]$. By \Cref{thm:convex}, if $x, y\in \X(\rho)$ then $[xy]\sub\X(\rho)$.

Many representation-theoretic properties of $\rho$ are encoded in the structure of $\X(\rho)$, in ways that we now recall.

\begin{prop}
    The subcomplex $\X(\rho)$ is bounded if and only if $\rho$ is irreducible.
\end{prop}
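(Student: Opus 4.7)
The plan is to prove the two implications separately. The direction ``reducible $\Rightarrow$ unbounded'' will be an explicit construction producing a sequence of stable lattices at unbounded distance. The direction ``irreducible $\Rightarrow$ bounded'' I would handle by contrapositive, using CAT(0) geometry of the building together with the identification of visual boundary points of $\X(V)$ with proper parabolic subgroups of $\PGL(V)$.

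For the first direction, suppose $W \subsetneq V$ is a proper nonzero $G$-stable subspace. Fix a $G$-stable lattice $\Lambda$ in $V$ and set $M := \Lambda \cap W$, a $G$-stable $\O_K$-lattice in $W$. For each $n \ge 1$ define $\Lambda_n := \Lambda + \pi^{-n}M$; this is manifestly a $G$-stable lattice in $V$. Using $\pi^{-n}M \subseteq W$ we have $\pi^{-n}M \cap \Lambda = M$, so the second isomorphism theorem gives $\Lambda_n/\Lambda \cong (\O_K/\pi^n)^{\dim W}$. The resulting elementary divisors force the building distance from $[\Lambda]$ to $[\Lambda_n]$ to grow linearly in $n$, so $\X(\rho)$ is unbounded.

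For the second direction, I would argue the contrapositive. Suppose $\X(\rho)$ is unbounded; fix $x_0 \in \X(\rho)$ and a sequence $x_n \in \X(\rho)$ with $d(x_0, x_n) \to \infty$, working in the complete CAT(0) geometric realisation of $\X(V)$, whose apartments are Euclidean spaces. By \Cref{thm:convex}, each geodesic segment $[x_0 x_n]$ lies in $\X(\rho)$. After passing to a subsequence, an Arzel\`a--Ascoli argument applied to the reparametrised segments yields a limit geodesic ray $\gamma \colon [0, \infty) \to \X(V)$ with $\gamma(0) = x_0$. Since $\X(\rho)$ is the fixed-point set of $\rho(G)$ in $\X(V)$, it is closed, so $\gamma \subseteq \X(\rho)$ and in particular $\rho(G)$ fixes the endpoint at infinity $\xi := \gamma(\infty) \in \partial \X(V)$. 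By standard Bruhat--Tits theory, the visual boundary of the building of $\PGL(V)$ is the spherical building at infinity, whose simplices correspond to proper flags in $V$, and the stabiliser in $\PGL(V)$ of any such point is a proper parabolic subgroup preserving a proper nonzero subspace of $V$. Hence $\rho(G)$ preserves a proper nonzero subspace, so $\rho$ is reducible.

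The main obstacle is the Arzel\`a--Ascoli extraction of a limit ray, which needs some care in the non-locally-compact setting (i.e.\ when $\F$ is infinite), and invoking the correspondence between points of the visual boundary of $\X(V)$ and proper parabolic subgroups of $\PGL(V)$. Both steps are classical Bruhat--Tits theory, so the real writeup should amount to a short CAT(0) argument followed by an appeal to these facts. Alternatively, one could argue purely algebraically: setting $R := \O_K[\rho(G)]$, one has $R$ is an $\O_K$-order in $K[\rho(G)] = \End_D(V)$ (by Schur and Jacobson density, using that $D := \End_{K[G]}(V)$ is a division algebra), and one then bounds all $R$-stable lattices in terms of a fixed lattice stable under any maximal order $R^{\max} \supseteq R$.
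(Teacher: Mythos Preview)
Your proposal is sound and, in fact, considerably more explicit than the paper's own argument, which simply cites \cite{Bellaiche-apropos}*{Prop.\ 3.2.1} for both directions in the locally compact case and \cite{suh2023stable}*{Prop.\ 2.2.1} for the general ``if'' direction. Your construction $\Lambda_n=\Lambda+\pi^{-n}(\Lambda\cap W)$ is exactly the standard way to exhibit a half-line in $\X(\rho)$ from a proper invariant subspace, and your elementary-divisor computation is correct.

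For the converse, your diagnosis of the difficulty is accurate but slightly understated: the Arzel\`a--Ascoli extraction does not merely ``need care'' when $\F$ is infinite, it genuinely fails. The geometric realisation of $\X(V)$ is a proper CAT(0) space (closed balls compact) precisely when $K$ is locally compact; without properness there is no reason an unbounded convex set should contain a geodesic ray. This is exactly why the paper splits cases and invokes a separate reference for general $K$. So your CAT(0) argument is a complete proof only in the locally compact case, matching the paper's first sub-case.

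Your algebraic alternative is the correct route in general, and is essentially what lies behind the cited result of Suh. One small refinement: the maximal order $R^{\max}$ is a bit of a red herring, since an arbitrary $R$-stable lattice need not be $R^{\max}$-stable. The cleaner argument is that $R=\O_K[\rho(G)]$ is already an $\O_K$-order in $\End_D(V)$, so there is an integer $c\ge 0$ with $\pi^c\End_{\O_D}(\Lambda_0)\subseteq R$ for a fixed stable lattice $\Lambda_0$; then for any stable $\Lambda$ normalised so that $\Lambda\subseteq\Lambda_0$ but $\Lambda\not\subseteq\pi\Lambda_0$, picking $v\in\Lambda\setminus\pi\Lambda_0$ gives $\Lambda\supseteq Rv\supseteq\pi^c\Lambda_0$, which bounds $\X(\rho)$ uniformly.
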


\begin{proof}
    The only if direction follows from \cite{Bellaiche-apropos}*{Prop.\ 3.2.1}, which states that $\rho$ is reducible if and only if $\X(\rho)$ contains a half-line.

    When $K$ is locally compact, $\X(\rho)$ is unbounded if and only if it contains a half-line, in which case, the if direction also follows from \cite{Bellaiche-apropos}*{Prop.\ 3.2.1}. For general $K$, the if direction is \cite{suh2023stable}*{Prop.\ 2.2.1}.
\end{proof}

\begin{defn}\label{def:extremal}
    We say that a point $x\in\X(\rho)$ is \emph{extremal} if for every $y,z\in\X(\rho)$, if $x\in[yz]$ then $x = y$ or $x = z$.
\end{defn}

The following result is well-known. A proof is given in \cite{suh2023stable}*{Prop.\ 1.1.1}, where the result is attributed to Serre.

\begin{prop}
Let $x\in \X(\rho)$ and let $\Lambda$ be a lattice representing $x$.
Then $x$ is an extremal vertex of $\X(\rho)$ if and only if $\Lambda_x/\pi\Lambda_x$ is indecomposable as a representation of $G$.
\end{prop}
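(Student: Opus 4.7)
The plan is to prove the two directions separately, using the geometry of the Bruhat--Tits building localised at the vertex $x$.

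For the implication that decomposability of $\Lambda/\pi\Lambda$ prevents $x$ from being extremal, write $\Lambda/\pi\Lambda = A \oplus B$ as a direct sum of nonzero $G$-submodules and let $\Lambda_A, \Lambda_B \subset \Lambda$ denote the preimages of $A$ and $B$ under the reduction $\Lambda \to \Lambda/\pi\Lambda$. Both $\Lambda_A, \Lambda_B$ are $G$-stable lattices satisfying $\pi\Lambda \subsetneq \Lambda_A, \Lambda_B \subsetneq \Lambda$, $\Lambda_A + \Lambda_B = \Lambda$, and $\Lambda_A \cap \Lambda_B = \pi\Lambda$. Choosing a basis of $\Lambda$ compatible with the decomposition, a direct computation in the resulting apartment of $\X(V)$ shows that $x = [\Lambda]$ is the midpoint of the geodesic from $[\Lambda_A]$ to $[\Lambda_B]$; since these are distinct homothety classes both lying in $\X(\rho)$, this exhibits $x$ as an interior point of a nondegenerate geodesic, so $x$ is not extremal.

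For the converse implication, I would exploit the standard identification of the link of $x$ in $\X(V)$ with the spherical building of $\mathrm{PGL}(\Lambda/\pi\Lambda)$: a lattice $\Lambda'$ with $\pi\Lambda \subsetneq \Lambda' \subsetneq \Lambda$ corresponds to the subspace $\Lambda'/\pi\Lambda \subset \Lambda/\pi\Lambda$, and higher-dimensional simplices correspond to flags in $\Lambda/\pi\Lambda$. Under this identification, two vertices are antipodal (at spherical distance $\pi$) precisely when the corresponding subspaces are complementary, and a vertex lies in the link of $x$ inside $\X(\rho)$ exactly when the corresponding subspace is $G$-stable. General CAT$(0)$ geometry then guarantees that a vertex of a convex subcomplex is extremal if and only if its link in the subcomplex contains no pair of antipodal vertices. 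Thus, if $x$ is not extremal, we obtain $G$-stable complementary subspaces $W_1, W_2 \subset \Lambda/\pi\Lambda$, so that $\Lambda/\pi\Lambda = W_1 \oplus W_2$ is the required $G$-stable decomposition.

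The main technical subtlety is justifying this link-theoretic characterisation of extremality --- that is, passing from the existence of a nondegenerate geodesic $[y, z] \subset \X(\rho)$ through $x$ to an antipodal pair of vertices in the link of $x$ inside $\X(\rho)$. This is a standard consequence of the theory of Euclidean buildings, but it can be avoided entirely by working in an apartment containing $x, y, z$ and reducing to the case where $y, z$ are the first vertices along the geodesics $[xy]$ and $[xz]$ adjacent to $x$; the midpoint condition in this apartment then forces the corresponding lattices to satisfy the complementarity relations $\Lambda_y + \Lambda_z = \Lambda$ and $\Lambda_y \cap \Lambda_z = \pi\Lambda$ established in the first direction, and taking reductions modulo $\pi$ supplies the required direct sum decomposition.
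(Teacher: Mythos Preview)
The paper does not supply its own proof of this proposition; it simply cites \cite{suh2023stable}*{Prop.\ 1.1.1} and attributes the result to Serre. So there is nothing to compare against, and I will comment only on the internal soundness of your argument.

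Your first direction (decomposable $\Rightarrow$ not extremal) is correct and is the standard argument.

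In the second direction, your link-theoretic approach is the right framework, but one step is understated. CAT$(0)$ geometry gives you a pair of antipodal \emph{points} $\xi,\eta$ in $\mathrm{link}(x,\X(\rho))$ (the directions toward $y$ and $z$), not a pair of antipodal \emph{vertices}. To upgrade, let $\sigma,\tau$ be the minimal simplices of $\mathrm{link}(x,\X(V))$ containing $\xi,\eta$; since $\X(\rho)$ is a full subcomplex, $\sigma,\tau$ lie in $\mathrm{link}(x,\X(\rho))$. Antipodality of $\xi,\eta$ forces $\sigma$ and $\tau$ to be opposite simplices in the spherical building of type $A_{n-1}$, and for opposite flags $(V_1\subsetneq\cdots\subsetneq V_k)$ and $(U_1\subsetneq\cdots\subsetneq U_k)$ one has $V_i\oplus U_{k+1-i}=\Lambda/\pi\Lambda$ for every $i$. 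In particular some vertex of $\sigma$ is complementary to some vertex of $\tau$, and both are $G$-stable. This fills the gap, but it should be said explicitly.

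Your alternative approach has a genuine problem. The phrase ``the first vertices along the geodesics $[xy]$ and $[xz]$ adjacent to $x$'' presumes such vertices exist on the geodesic, but a geodesic through a vertex of a Euclidean building need not pass through any adjacent vertex: already in the $A_2$ apartment, a line through the origin in a generic direction meets no neighbouring lattice point. So the reduction you propose cannot be carried out as stated; the correct move is to take the simplices the two geodesics enter upon leaving $x$, observe they are opposite in the apartment, and then argue as in the previous paragraph --- which is just your first approach again.
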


\section{Maximal lattices}\label{sec:max-vertices}

Let $\Lambda \sub V$ be a lattice and let $v\in V$ be a nonzero vector. Then we can always choose a lattice $\Lambda'$, homethetic to $\Lambda$, such that that $v\in \Lambda'$ and the image of $v$ in $\Lambda'/\pi\Lambda'$ is nonzero.

\begin{defn}
    We say that $\Lambda$ is \emph{normalised} at $v$ if $v\in \Lambda$ and it its image in $\Lambda/\pi\Lambda$ is nonzero. Equivalently, $\Lambda$ is normalised at $v$ if $v\in \Lambda\setminus \pi\ii\Lambda$.
\end{defn}

Now let $\rho\:G\to \GL(V)$ be a representation, and let $\X(\rho)$ be its invariant building, as defined in \Cref{def:invariant subcomplex}.
For each nonzero vector $v\in V$, we denote by $\LL_v$ the set of all invariant lattices in $V$ that are normalised at $v$.
We equip $\LL_v$ with the partial order given by inclusion.

\begin{defn}
Fix a nonzero vector $v\in V$.
    \begin{enumerate}
    \item We say that a $\rho(G)$-stable lattice $\Lambda$ is maximal with respect to $v$ if $\Lambda$ is maximal in $\LL_v$.
    \item We say that a vertex $x\in \mathcal{X}(\rho)$ \emph{is maximal with respect to $v$} if $x$ is represented by a lattice $\Lambda_x\in\LL_v$ which is maximal with respect to $v$.

    \item We say that a vertex $x\in \mathcal{X}(\rho)$ is \emph{maximal} if it is maximal with respect to some nonzero vector in $V$.
    \end{enumerate}
We denote by $\X_{\max}(\rho)$ the set of all maximal vertices in $\X(\rho)$.
\end{defn} 

\subsection{Existence of maximal vertices}

We first show that, for every nonzero $v\in V$, there exists at least one lattice that is maximal with respect to $v$.

\begin{lem}\label{lem_finite_sequence}
Assume that $(V,\rho)$ is irreducible.
Let $v\in V$ be a nonzero vector.
    \begin{enumerate}
    \item Any strictly increasing sequence of lattices in $\LL_v$ is finite.

    \item There exists a maximal lattice with respect to $v$.
    Moreover, any invariant lattice $\Lambda$ that is normalised at $v$ is contained in a maximal lattice with respect to $v$.
    \end{enumerate}
\end{lem}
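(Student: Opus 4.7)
The plan is to establish (i) first — the ascending chain condition on $\mathcal{L}_v$ — and then deduce (ii) as a formal consequence. For (ii), once (i) is in hand, given any $\Lambda\in\mathcal{L}_v$, iteratively replace it by a strictly larger element of $\mathcal{L}_v$ if one exists; by (i) this procedure terminates, and the terminal lattice is maximal with respect to $v$ and contains $\Lambda$. Equivalently, every chain in $\mathcal{L}_v$ is finite, so Zorn's lemma applies to the subposet of elements of $\mathcal{L}_v$ containing $\Lambda$.

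So the real content is (i). The input is that, since $\rho$ is irreducible, the invariant complex $\mathcal{X}(\rho)$ is bounded, by the Proposition preceding the lemma. Fix an integer $N$ bounding the building distance between any two vertices of $\mathcal{X}(\rho)$, measured in the following sense: if $x,y\in \mathcal{X}(\rho)$ are represented by lattices $\Lambda_x\subseteq \Lambda_y$, then, by the elementary divisor theorem, there is a basis $e_1,\dots,e_d$ of $V$ with $\Lambda_x = \bigoplus_k \mathcal{O}_K e_k$ and $\Lambda_y = \bigoplus_k \pi^{n_k}\mathcal{O}_K e_k$ for integers $n_k\le 0$, and we take $d(x,y) := \max_k n_k - \min_k n_k$.

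Now suppose $\Lambda_1\subsetneq \Lambda_2\subsetneq \cdots$ is a strictly increasing sequence in $\mathcal{L}_v$. The key step is to bound $\mathrm{length}_{\mathcal{O}_K}(\Lambda_i/\Lambda_1)$ uniformly in $i$. Fix $i$ and pick a basis $e_1,\dots,e_d$ with $\Lambda_1 = \bigoplus_k \mathcal{O}_K e_k$ and $\Lambda_i = \bigoplus_k \pi^{n_k}\mathcal{O}_K e_k$, $n_k\le 0$. Write $v = \sum_k c_k e_k$. The normalisation of $\Lambda_1$ at $v$ gives $\min_k v(c_k)=0$, while the normalisation of $\Lambda_i$ at $v$ gives $\min_k (v(c_k)-n_k)=0$. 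Since $v(c_k)\ge 0$ and $n_k\le 0$ for every $k$, both minima can be simultaneously $0$ only if some index $k$ has $v(c_k)=n_k=0$; in particular $\max_k n_k=0$. Combined with $d([\Lambda_1],[\Lambda_i])=-\min_k n_k\le N$, this forces $n_k\in [-N,0]$ for all $k$, and therefore $\mathrm{length}_{\mathcal{O}_K}(\Lambda_i/\Lambda_1)=\sum_k(-n_k)\le dN$. Since each inclusion $\Lambda_j\subsetneq \Lambda_{j+1}$ is strict, the lengths increase by at least one at each step, so $i\le dN+1$ and the chain is finite.

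The main subtlety, and the place the argument could most easily go wrong, is the step forcing $\max_k n_k=0$. This is exactly where the \emph{shared} normalisation of $\Lambda_1$ and $\Lambda_i$ at the single vector $v$ is used: it forces at least one elementary divisor of $\Lambda_i$ relative to $\Lambda_1$ to vanish, which couples the boundedness of $\mathcal{X}(\rho)$ with the finiteness of the chain. Everything else is formal: strict inclusions in $\mathcal{L}_v$ represent distinct homothety classes (a homothety $\Lambda' = \pi^{-n}u\Lambda$ with $n\ge 0$ and $u\in\mathcal{O}_K^\times$ that preserves normalisation at $v$ must have $n=0$), and Zorn/iteration handles (ii).
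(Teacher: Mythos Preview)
Your argument is correct, but it proceeds differently from the paper's. The paper argues directly: it forms the union $L=\bigcup_k\Lambda_k$, observes that $L$ is a nonzero $\mathcal{O}_K[G]$-submodule of $V$ not containing $\pi^{-1}v$, and invokes irreducibility to conclude that $L$ is itself a lattice; commensurability with $\Lambda_0$ then gives $L\subseteq\pi^{-m}\Lambda_0$, and the Noetherian property of $\pi^{-m}\Lambda_0/\Lambda_0$ finishes (i). Your route instead imports the boundedness of $\mathcal{X}(\rho)$ from the earlier Proposition and combines it with an elementary-divisor computation; the pleasant extra step, that the shared normalisation at $v$ forces one elementary divisor of $\Lambda_i$ relative to $\Lambda_1$ to vanish, is exactly what converts the homothety-invariant bound $\max_k n_k-\min_k n_k\le N$ into an absolute bound $n_k\in[-N,0]$. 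The trade-off: the paper's proof is self-contained (it does not appeal to the boundedness Proposition, which in turn cites \cite{suh2023stable} for general $K$), whereas yours is shorter and more transparent once that Proposition is granted, and it even yields an explicit length bound $dN$ on chains in $\mathcal{L}_v$. Both derive (ii) from (i) in the same formal way.
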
 

    \begin{proof}
        Part $(ii)$ is a simple consequence of part $(i)$, which we now prove. Let 
    \[\Lambda_0\subsetneq\Lambda_1\subsetneq\Lambda_2\subsetneq\dots\]
    be a strictly increasing sequence of lattices in $\LL_v$.
    Since $V$ is irreducible, if $W$ is a nonzero $\O_K[G]$-submodule of $V$, then $W\tensor_{\O_K[G]}K[G] = V$. Hence, $W$ is either a lattice or the whole of $V$. 
    Therefore, the $\O_K[G]$-module $L=\bigcup_{k\geq 0}\Lambda_k$, which is nonzero and does not contain $\pi\ii v$, is a lattice.
    Since any two lattices in $V$ are commensurable, there exists an integer $m$ such that $\Lambda_0\sub L\sub \pi^{-m}\Lambda_0$.
    The ring $R=\O_K/\pi^m\O_K$ is Noetherian, and $\pi^{-m}\Lambda_0/\Lambda_0$ is a finitely generated module over $R$, so it is a Noetherian module.
    It follows that the sequence 
    \[\Lambda_0/\Lambda_0\subsetneq \Lambda_1/\Lambda_0\subsetneq\Lambda_2/\Lambda_0\subsetneq\dots\]
    is finite, and therefore that the sequence $\Lambda_0\subsetneq \Lambda_1\subsetneq\Lambda_2\subsetneq\dots$ is finite as well.
    \end{proof}

\begin{thm}\label{thm_maximal_equivalence}
Let $v\in V$ be a nonzero vector and let $\Lambda\in\LL_v$ be a $\rho(G)$-invariant lattice that is normalised at $v$.
The following are equivalent.
    \begin{enumerate}
    \item $\Lambda$ is maximal with respect to $v$.
    \item The socle of $\Lambda/\pi\Lambda$ is irreducible and generated by the image of $v$.
    \end{enumerate}
\end{thm}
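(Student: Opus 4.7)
Let $\bar v$ denote the image of $v$ in $\Lambda/\pi\Lambda$, which is nonzero by the assumption that $\Lambda$ is normalised at $v$. The main observation I would exploit is the following general construction: given any $\F[G]$-submodule $W \subsetneq \Lambda/\pi\Lambda$, with preimage $\Lambda'' \sub \Lambda$ under the reduction map, the lattice $\Lambda' := \pi\ii\Lambda''$ is $G$-stable, strictly contains $\Lambda$, and satisfies $\pi\Lambda' = \Lambda''$. Consequently $\Lambda' \in \LL_v$ if and only if $v \notin \Lambda''$, i.e.\ if and only if $\bar v \notin W$. Both implications of the theorem reduce to this construction together with standard properties of the socle.

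For (i) $\Rightarrow$ (ii) I would argue by contrapositive. If $\soc(\Lambda/\pi\Lambda)$ is reducible, it has at least two distinct irreducible submodules, which necessarily intersect trivially, so at most one of them contains $\bar v$; I take the other as $W$. If instead $\soc(\Lambda/\pi\Lambda)$ is irreducible but not generated by $\bar v$, then $\bar v \notin \soc(\Lambda/\pi\Lambda)$ (otherwise $\bar v$ would generate the irreducible socle), and I take $W = \soc(\Lambda/\pi\Lambda)$. In either case, the construction produces a lattice in $\LL_v$ strictly containing $\Lambda$, contradicting maximality.

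For (ii) $\Rightarrow$ (i), let $W := \soc(\Lambda/\pi\Lambda)$, which by hypothesis is irreducible and generated by $\bar v$. Suppose for contradiction that some $\Lambda' \in \LL_v$ strictly contains $\Lambda$, and set $M := \pi\Lambda' \cap \Lambda$; I plan to show $v \in M$, contradicting $v \notin \pi\Lambda'$. The crucial step is that $M/\pi\Lambda$ is a nonzero $\F[G]$-submodule of $\Lambda/\pi\Lambda$: since $\Lambda'/\Lambda$ is a nonzero finitely generated torsion $\O_K$-module, it contains a nonzero $\pi$-torsion element, so there exists $x \in \Lambda' \setminus \Lambda$ with $\pi x \in M \setminus \pi\Lambda$. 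As a nonzero submodule of a finite-length module, $M/\pi\Lambda$ has nonzero socle, and the general identity $\soc(M/\pi\Lambda) = W \cap (M/\pi\Lambda)$ combined with the irreducibility of $W$ forces $W \sub M/\pi\Lambda$. Hence $\bar v \in W \sub M/\pi\Lambda$, giving $v \in M \sub \pi\Lambda'$, the desired contradiction.

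The main technical point is the nonvanishing $M/\pi\Lambda \neq 0$, which rests on the existence of $\pi$-torsion in $\Lambda'/\Lambda$; once this is in hand, the rest is a formal manipulation of socles of finite-length modules.
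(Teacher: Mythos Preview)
Your proof is correct and follows essentially the same approach as the paper. Both directions hinge on the same construction: for (i)$\Rightarrow$(ii), pulling back a subrepresentation $W\not\ni\bar v$ and scaling by $\pi^{-1}$ to produce a strictly larger element of $\LL_v$; for (ii)$\Rightarrow$(i), your module $M/\pi\Lambda$ is exactly the kernel of the paper's reduction map $\varphi\colon\Lambda/\pi\Lambda\to\Lambda'/\pi\Lambda'$. The paper's (ii)$\Rightarrow$(i) is organized a bit more economically---it shows $\ker\varphi=0$ directly (since $\bar v\notin\ker\varphi$ and every nonzero subrepresentation contains $\bar v$) and concludes $\Lambda=\Lambda'$ by a dimension count, bypassing your separate verification that $M/\pi\Lambda\neq 0$---but the content is the same.
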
 
    \begin{proof}
    Assume $(i)$. Let $\overline{v}$ be the image of $v$ in $\Lambda/\pi\Lambda$. Then $\overline{v}$ is nonzero by definition. 
    It is sufficient to show that any nonzero subrepresentation of $\Lambda/\pi\Lambda$ contains $\overline{v}$. 
    
    Let $U\sub \Lambda/\pi\Lambda$ be a subrepresentation not containing $\overline{v}$.
    We will show that $U=0$. 
    Let $\Lambda'$ be the inverse image of $U$ under the quotient map $\Lambda\map \Lambda/\pi\Lambda$. Then $\pi\Lambda\sub\Lambda'\sub\Lambda$ and by definition, $v\notin \Lambda'$. Since $\pi v\in \pi\Lambda\sub\Lambda'$, we see that $v\in \pi\ii\Lambda'\setminus \Lambda'$, so that the lattice $\pi^{-1}\Lambda'$ is normalised at $v$ and contains $\Lambda$.
    By the maximality of $\Lambda$ with respect to $v$, $\pi^{-1}\Lambda'=\Lambda$.
    Therefore, $U=0$.

    Assume $(ii)$.
    Let $\Lambda'$ be an invariant lattice containing $\Lambda$ and normalised at $v$.
    Consider the map 
    \[\varphi\:\Lambda/\pi\Lambda\map \Lambda'/\pi \Lambda'\]
    induced by the inclusion $\Lambda\sub \Lambda'$. We will show that $\varphi$ is an isomorphism, whence $\Lambda'=\Lambda$.

    Since $\varphi$ is a map of $\F$-vector spaces of the same dimension, it is sufficient to show that it is injective. Let $\overline{v}$ denote the image of $v\in\Lambda/\pi\Lambda$. Then $\varphi(\overline v)$ is the image of $v$ in $\Lambda'/\pi\Lambda'$ which is, by assumption, nonzero. Hence $\overline{v}\notin\ker(\varphi)$. But, by assumption, every nonzero subrepresentation of $\Lambda/\pi\Lambda$ contains $\overline{v}$. Hence $\ker(\varphi) = 0$, so $\Lambda = \Lambda'$. So $\Lambda$ is maximal with respect to $v$.
    \end{proof}

\subsection{Sharp subquotients}

Let $\overline{V}$ be a finite-dimensional representation of $G$ over the residue field $\F$.
By a subquotient of $\overline{V}$ we mean a pair $(V_1,V_2)$ of two subrepresentations of $\overline{V}$ such that $V_1\subset V_2$.
We say that a subquotient $(V_1,V_2)$ is irreducible if $V_2/V_1$ is an irreducible representation of $G$.

Fix a $\rho(G)$-invariant lattice $\Lambda$ and let $x\in\X_{\max}(\rho)$ be any maximal vertex. Then we can always choose a representative $\Lambda_x$ of $x$ such that $\Lambda\sub\Lambda_x$ and $\Lambda\not\sub\pi\Lambda_x$, in which case, there is a natural map
\[\varphi\:\Lambda/\pi\Lambda\to\Lambda_x/\pi\Lambda_x\]
induced by the inclusion $\Lambda\sub\Lambda_x$. Now, by \Cref{thm_maximal_equivalence}, the socle of $\Lambda_x/\pi\Lambda_x$ is irreducible. Hence, the pair
\[(\varphi\ii(0), \varphi\ii(\soc(\Lambda_x/\pi\Lambda_x))\]
is an irreducible subquotient of $\Lambda/\pi\Lambda$.

\begin{defn}\label{def:theta-l}
    For each lattice $\rho(G)$-invariant lattice $\Lambda$, define
    \[\theta_\Lambda\:\X_{\max}(\rho)\to \{\text{irreducible subquotients of }\Lambda/\pi\Lambda\}\]
    by 
    \[x\mapsto (\ker\varphi, \varphi\ii(\soc(\Lambda_x/\pi\Lambda_x)).\]
\end{defn}

In this section, we prove that a subquotient $(V_1, V_2)$ lies in the image of $\theta_\Lambda$ if and only if it is what we call a \emph{sharp subquotient} of $\Lambda/\pi\Lambda$, which we now define.

\begin{defn}
Let $\overline{V}$ be a finite dimensional representation of $G$ over the residue field $\F$.
We say that a subquotient $(V_1,V_2)$ of $\overline{V}$ is \emph{sharp} if, for every subrepresentation $W$ of $\overline{V}$, if $W\supsetneq V_1$, then $W\supseteq V_2$. We let $\mathrm{sharp}(\overline{V})$ denote the set of sharp subquotients of $\overline{V}$.
\end{defn} 

    \begin{exmp}
    Let $V_1$ be a maximal subrepresentation of $\overline{V}$.
    Then $(V_1,\overline{V})$ is a sharp subquotient.
    \end{exmp} 

Clearly, a sharp subquotient is an irreducible subquotient.
The converse is not true in general, however, any irreducible subquotient can be enlarged to a sharp subquotient in the following sense:

\begin{defn}
We say that a sharp subquotient $(V_1',V_2')$ lies above an irreducible subquotient $(V_1,V_2)$ if $V_2\sub V_2'$ and $V_2\cap V_1'=V_1$.
\end{defn}

\begin{prop}\label{prop_sharp_lying_above}
Let $(V_1,V_2)$ be an irreducible subquotient of $\overline V$.
    \begin{enumerate}
    \item There exists a sharp subquotient lying above $(V_1,V_2)$.
    
    \item Let $(V_1',V_2')$ be a sharp subquotient lying above $(V_1,V_2)$.
    Then
    \[V_2'/V_1'\simeq V_2/V_1.\]
    \end{enumerate}
\end{prop}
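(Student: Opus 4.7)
The plan is to construct $V_1'$ explicitly by a maximality argument. Since $\overline V$ is finite-dimensional, the collection of subrepresentations of $\overline V$ that contain $V_1$ but do \emph{not} contain $V_2$ is non-empty (it contains $V_1$ itself) and admits a maximal element $V_1'$. I would then define $V_2' := V_1' + V_2$ and show that the pair $(V_1', V_2')$ does the job.

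For part $(i)$, I would verify the three properties in turn. Sharpness of $(V_1',V_2')$ is immediate from the choice of $V_1'$: any subrepresentation $W \supsetneq V_1'$ must contain $V_2$ by maximality, hence $W \supseteq V_1' + V_2 = V_2'$. The containment $V_2 \subseteq V_2'$ holds by construction. Finally, to establish $V_2\cap V_1' = V_1$, I would observe that $V_2\cap V_1'$ is a subrepresentation of $V_2$ containing $V_1$; by the irreducibility of $V_2/V_1$ it must equal either $V_1$ or $V_2$, and the latter is excluded because $V_2\not\subseteq V_1'$.

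For part $(ii)$, given any sharp $(V_1',V_2')$ lying above $(V_1,V_2)$, the plan is to show that the composition
\[
V_2 \hookrightarrow V_2' \twoheadrightarrow V_2'/V_1'
\]
is an isomorphism. The kernel is $V_2\cap V_1' = V_1$ by the definition of ``lying above'', giving an injection $V_2/V_1 \hookrightarrow V_2'/V_1'$. For surjectivity, the subrepresentation $V_2 + V_1' \subseteq V_2'$ strictly contains $V_1'$ (because $V_2\cap V_1' = V_1 \neq V_2$, so $V_2\not\subseteq V_1'$), and then sharpness forces $V_2 + V_1' \supseteq V_2'$, hence equality. Dividing by $V_1'$ yields the desired surjection.

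There is no serious obstacle here; the argument is essentially a bookkeeping exercise combining the irreducibility of $V_2/V_1$ with a Noetherian-style maximality. The only subtle point is recognising that the same fact — that $V_2 \cap V_1'$ is a proper subrepresentation of $V_2$ containing $V_1$, and therefore equals $V_1$ by irreducibility — is what makes both parts work.
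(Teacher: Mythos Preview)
Your proposal is correct and follows essentially the same approach as the paper: both construct $V_1'$ as a maximal subrepresentation containing $V_1$ but not $V_2$, set $V_2' = V_1' + V_2$, and for part $(ii)$ both analyse the map $V_2 \to V_2'/V_1'$ using sharpness for surjectivity and the condition $V_2\cap V_1' = V_1$ for the kernel. You simply spell out in more detail the verifications that the paper leaves implicit.
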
 
    \begin{proof}
        \begin{enumerate}
        \item Let $V_1'$ be a maximal subrepresentation of $\overline V$ containing $V_1$ but not containing $V_2$.
        Denote $V'_2=V_1'+V_2$.
        Then $(V_1',V_2')$ is a sharp subquotient lying above $(V_1,V_2)$.
        
        \item The map $V_2\map V_2'/V_1'$ induced by the  inclusion $V_2\sub V_2'$ is surjective.
        Indeed, the subrepresentation $V_2+V_1'$ strictly contains $V_1'$, so $V_2'\sub V_2+V_1'$.
        The kernel of the map $V_2\map V_2'/V_1'$ is $V_2\cap V_1'=V_1$, hence the map $V_2/V_1\map V_2'/V_1'$ is an isomorphism.     
        \end{enumerate}
    
    \end{proof}

\begin{prop}\label{thm_sharp_bound_length}
Let $c=\mathrm{len}(\overline{V})$ be the length of the $G$-representation $\overline{V}$.
The number of sharp subquotients of $\overline{V}$ is at least $c$.
\end{prop}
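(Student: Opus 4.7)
The natural approach is to fix a composition series of $\overline{V}$ and, for each irreducible subquotient appearing in it, produce a sharp subquotient lying above it by invoking \Cref{prop_sharp_lying_above}. One then has to check that the $c$ sharp subquotients obtained in this way are pairwise distinct, which I would do by tracking which submodules from the composition series each one contains.

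Concretely, I would fix a composition series
\[0 = \overline{V}_0 \subsetneq \overline{V}_1 \subsetneq \cdots \subsetneq \overline{V}_c = \overline{V}.\]
For each $i = 1, \dots, c$, the pair $(\overline{V}_{i-1}, \overline{V}_i)$ is an irreducible subquotient, so by \Cref{prop_sharp_lying_above}(i) there exists a sharp subquotient $(U_1^{(i)}, U_2^{(i)})$ lying above it. Following the proof of \Cref{prop_sharp_lying_above}(i), one may take $U_1^{(i)}$ to be maximal among subrepresentations of $\overline{V}$ that contain $\overline{V}_{i-1}$ but do not contain $\overline{V}_i$, and set $U_2^{(i)} = U_1^{(i)} + \overline{V}_i$; such a $U_1^{(i)}$ exists because $\overline{V}_{i-1}$ itself satisfies these properties and $\overline{V}$ is of finite length.

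For distinctness, the key observation is that for each $i$ one has $U_1^{(i)} \supseteq \overline{V}_{i-1}$, and hence $U_1^{(i)} \supseteq \overline{V}_j$ for all $j \le i-1$, while by construction $U_1^{(i)} \not\supseteq \overline{V}_i$. Consequently, if $i < i'$, then $U_1^{(i')} \supseteq \overline{V}_{i'-1} \supseteq \overline{V}_i$ whereas $U_1^{(i)} \not\supseteq \overline{V}_i$, so $U_1^{(i)} \neq U_1^{(i')}$. This yields $c$ distinct sharp subquotients, completing the proof.

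I do not expect a serious obstacle here, since the substantive content is already packaged in \Cref{prop_sharp_lying_above}. The distinctness argument is essentially a combinatorial observation: the chain $0 = \overline{V}_0 \subsetneq \overline{V}_1 \subsetneq \cdots \subsetneq \overline{V}_c$ supplies $c$ distinct ``cutoff levels'', and each $U_1^{(i)}$ contains exactly the prefix up to $\overline{V}_{i-1}$ but not the next step $\overline{V}_i$, which recovers $i$ from $U_1^{(i)}$ and so pins down the sharp subquotient uniquely.
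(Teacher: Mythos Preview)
Your proposal is correct and essentially identical to the paper's own proof: both fix a composition series, apply \Cref{prop_sharp_lying_above} to each step, and distinguish the resulting sharp subquotients by observing that the first member of the $i$-th one contains $\overline{V}_{i-1}$ but not $\overline{V}_i$. The only difference is cosmetic (indexing from $1$ to $c$ versus $0$ to $c-1$), and your explicit appeal to the construction inside the proof of \Cref{prop_sharp_lying_above} is unnecessary, since the containment $U_1^{(i)}\supseteq \overline{V}_{i-1}$ and non-containment $U_1^{(i)}\not\supseteq \overline{V}_i$ already follow from the definition of ``lying above'' via $\overline{V}_i\cap U_1^{(i)}=\overline{V}_{i-1}$.
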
 
    \begin{proof}
    Let
    \[0=V_0\subsetneq V_1\subsetneq V_2\subsetneq\dots\subsetneq V_{c}=\overline{V}\]
    be a Jordan--Holder sequence for $\overline{V}$.
    Consider the irreducible subquotients $(V_i,V_{i+1})$, for $i=0,\dots,c-1$.
    For each $i$ let $(W_i,U_i)$ be a sharp subquotient lying above $(V_i,V_{i+1})$.
    For each $i$, the subrepresentation $W_i$ contains $V_0,\dots,V_i$ but not $V_{i+1}$.
    Therefore, the spaces $W_0,\dots,W_{c-1}$ are pairwise distinct, so there are at least $c$ sharp subquotients.
    \end{proof} 

\subsubsection{Sharp subquotients and maximal vertices}

Now suppose that $(\rho, V)$ is a representation of $G$ and that $\Lambda$ is a $\rho(G)$-invariant lattice $\Lambda$, and recall the map
\[\theta_\Lambda\:\X_{\max}(\rho)\to \{\text{irreducible subquotients of }\Lambda/\pi\Lambda\}\]
from \Cref{def:theta-l}.

\begin{lem}
    For every $x\in\X_{\max}(\rho)$, the subquotient $\theta_\Lambda(x) = (\ker(\varphi), \varphi\ii(\soc(\Lambda_x/\pi\Lambda_x))$ is a sharp subquotient of $\Lambda/\pi\Lambda$.
\end{lem}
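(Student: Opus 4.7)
The plan is to read the sharpness of $\theta_\Lambda(x)$ directly off the characterisation of maximal vertices in \Cref{thm_maximal_equivalence}, via a formal pullback through $\varphi$.

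The first step will be to observe that $\soc(\Lambda_x/\pi\Lambda_x)$ is irreducible. Since $x$ is a maximal vertex, there is some nonzero vector $v\in V$ and some representative $\Lambda_x^{(v)}$ of $x$ that is maximal in $\LL_v$; \Cref{thm_maximal_equivalence} then gives that $\soc(\Lambda_x^{(v)}/\pi\Lambda_x^{(v)})$ is irreducible. The representative $\Lambda_x$ chosen in the construction of $\theta_\Lambda$ differs from $\Lambda_x^{(v)}$ by a power of $\pi$, and multiplication by such a power induces a canonical $G$-equivariant isomorphism $\Lambda_x/\pi\Lambda_x \simeq \Lambda_x^{(v)}/\pi\Lambda_x^{(v)}$, so irreducibility of the socle transfers to $\Lambda_x/\pi\Lambda_x$.

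Next, writing $V_1 = \ker(\varphi)$ and $V_2 = \varphi^{-1}(\soc(\Lambda_x/\pi\Lambda_x))$, I will verify sharpness directly. Given any subrepresentation $W$ of $\Lambda/\pi\Lambda$ with $W \supsetneq V_1$, the hypothesis $W \not\sub \ker(\varphi)$ forces $\varphi(W)$ to be a nonzero subrepresentation of $\Lambda_x/\pi\Lambda_x$. Because $\Lambda_x/\pi\Lambda_x$ has finite $\F$-length and its socle is simple, the socle is the unique minimal nonzero subrepresentation, so $\varphi(W) \supseteq \soc(\Lambda_x/\pi\Lambda_x)$. A routine pullback then finishes the argument: for any $a \in V_2$, I pick $w \in W$ with $\varphi(w) = \varphi(a)$, and note that $a - w \in \ker(\varphi) = V_1 \sub W$, so $a \in W$. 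Hence $V_2 \sub W$, and $(V_1, V_2)$ is sharp.

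There is no genuine obstacle here; the argument is essentially the dual of the second half of the proof of \Cref{thm_maximal_equivalence}. The only subtle point is ensuring that the irreducible-socle property is invariant under rescaling of the lattice representative, but this follows immediately from the canonical $G$-equivariant isomorphism induced by multiplication by a power of $\pi$.
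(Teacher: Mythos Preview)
Your proposal is correct and follows essentially the same route as the paper: both arguments invoke \Cref{thm_maximal_equivalence} to get that $\soc(\Lambda_x/\pi\Lambda_x)$ is simple, observe that $\varphi(W)$ is a nonzero subrepresentation and hence contains the socle, and then pull back. The only cosmetic difference is that the paper writes the final step as $V_2 \subseteq \varphi^{-1}(\varphi(W)) = W$ (using $\ker\varphi \subseteq W$), while you unpack this equality element by element; your extra remark on the independence of the chosen representative $\Lambda_x$ is a welcome clarification that the paper leaves implicit.
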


\begin{proof}
    By \Cref{thm_maximal_equivalence}, the socle $\soc(\Lambda_x/\pi\Lambda_x)$ is an irreducible representation of $G$. Clearly $V_1 = \ker(\varphi) = \varphi\ii(0)$ is a proper subrepresentation of $V_2 = \varphi\ii(\soc(\Lambda_x/\pi\Lambda_x))$. Moreover, if $W\sub \Lambda/\pi\Lambda$ is such that $V_1\subsetneq W$, then $\varphi(W)$ is a nonzero subrepresentation of $\Lambda_x/\pi\Lambda_x$, and hence $\varphi(W)\supseteq \soc(\Lambda_x/\pi\Lambda_x)$. It follows that
    \[V_2 = \varphi\ii(\soc(\Lambda_x/\pi\Lambda_x))\sub\varphi\ii(\varphi(W)) = W.\]
\end{proof}

It follows that the map $\theta_\Lambda$ can be viewed as a map $\theta_\Lambda\:\X_{\max}(\rho)\map \mathrm{sharp}(\Lambda/\pi\Lambda)$.

\begin{thm}\label{thm_maximal_sharp}
The map $\theta_\Lambda\:\X_{\max}(\rho)\map \mathrm{sharp}(\Lambda/\pi\Lambda)$ is surjective.
\end{thm}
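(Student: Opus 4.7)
The plan is, given a sharp subquotient $(V_1, V_2)$ of $\Lambda/\pi\Lambda$, to construct a maximal vertex $x \in \X_{\max}(\rho)$ with $\theta_\Lambda(x) = (V_1, V_2)$. Since $V_2 \supsetneq V_1$, I would pick some $v \in \Lambda$ whose image $\overline{v}$ in $\Lambda/\pi\Lambda$ lies in $V_2 \setminus V_1$. Let $\widetilde{V}_1 \sub \Lambda$ denote the preimage of $V_1$ under the reduction $\Lambda \surjection \Lambda/\pi\Lambda$; this is a $\rho(G)$-invariant $\O_K$-submodule with $\pi\Lambda \sub \widetilde{V}_1 \sub \Lambda$. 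I would then set $\Lambda_0 := \pi\ii \widetilde{V}_1$, which is squeezed between $\Lambda$ and $\pi\ii\Lambda$ and is therefore a $\rho(G)$-invariant lattice containing $\Lambda$. The condition $\overline{v} \notin V_1$ translates to $v \notin \widetilde{V}_1$, i.e.\ $v \in \Lambda_0 \setminus \pi\Lambda_0$, so $\Lambda_0 \in \LL_v$. By \Cref{lem_finite_sequence}$(ii)$, $\Lambda_0$ is contained in some maximal element $\Lambda_x \in \LL_v$, whose homothety class $x$ is the candidate maximal vertex.

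The bulk of the work is then to verify $\theta_\Lambda(x) = (V_1, V_2)$ by analysing the map $\varphi\:\Lambda/\pi\Lambda \map \Lambda_x/\pi\Lambda_x$. The inclusion $\widetilde{V}_1 \sub \pi\Lambda_0 \sub \pi\Lambda_x$ (built into the construction of $\Lambda_0$) immediately gives $V_1 \sub \ker\varphi$. For the reverse inclusion, since $\Lambda_x \in \LL_v$ is normalised at $v$, the element $v' := \varphi(\overline{v})$ is nonzero, so $V_2 \not\sub \ker\varphi$; the sharpness hypothesis on $(V_1, V_2)$ then forces $\ker\varphi \sub V_1$, giving $\ker\varphi = V_1$. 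Finally, \Cref{thm_maximal_equivalence} identifies $\soc(\Lambda_x/\pi\Lambda_x) = \F[G] \cdot v'$, and since $V_2/V_1$ is irreducible with $\overline{v}$ a nonzero element, $V_2 = V_1 + \F[G]\overline{v}$. The restriction of $\varphi$ to $V_2$ therefore surjects onto the socle with kernel $V_1$, showing $\varphi\ii(\soc(\Lambda_x/\pi\Lambda_x)) = V_2$.

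The main obstacle I foresee is that a naive choice of maximal lattice does not work: for an arbitrary maximal $\Lambda_x \in \LL_v$ containing $\Lambda$, the inclusion $V_1 \sub \ker\varphi$ is not automatic, and sharpness alone only controls the reverse direction. The purpose of passing through the enlargement $\Lambda_0 = \pi\ii\widetilde{V}_1$ is precisely to force the annihilation of $V_1$ modulo $\pi\Lambda_x$ from the outset, a property preserved under any further enlargement in $\LL_v$. Sharpness of $(V_1, V_2)$ is then what prevents this annihilation from going beyond $V_1$. Once these two fits are in place, the description of the socle from \Cref{thm_maximal_equivalence} and the irreducibility of $V_2/V_1$ together pin down the second component $V_2$ automatically.
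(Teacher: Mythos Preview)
Your proposal is correct and follows essentially the same approach as the paper: choose $v$ with $\overline{v}\in V_2\setminus V_1$, enlarge $\Lambda$ to $\pi^{-1}\widetilde{V}_1$ to force $V_1\sub\ker\varphi$, pass to a maximal lattice in $\LL_v$ above it via \Cref{lem_finite_sequence}, and then use sharpness plus \Cref{thm_maximal_equivalence} to identify both components of $\theta_\Lambda(x)$. The only cosmetic difference is in the final step: you argue $\varphi^{-1}(\soc)=V_2$ by exhibiting $\varphi|_{V_2}$ as a surjection onto $\soc=\F[G]\cdot v'$ (and then using $\ker\varphi=V_1\sub V_2$ to pull back), whereas the paper applies sharpness once more to the subrepresentation $\varphi^{-1}(\soc)\supsetneq V_1$ and then invokes irreducibility of the socle.
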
 

    \begin{proof}
    Let $(V_1,V_2)$ be sharp subquotient of $\Lambda/\pi\Lambda$.
    Let $v\in \Lambda$ be a vector whose image in the quotient $\Lambda/\pi\Lambda$ is contained in $V_2$ but not in $V_1$.
    Let $\Lambda'\sub \Lambda$ be the inverse image of $V_1$ under the quotient map $\Lambda\map\Lambda/\pi\Lambda$.
    Then $\Lambda\sub\pi^{-1}\Lambda'$, and both $\Lambda$ and $\pi^{-1}\Lambda'$ are normalised at $v$.
    By \Cref{lem_finite_sequence}, there exists an invariant lattice $\Lambda_x$, maximal with respect to $v$, and containing $\pi^{-1}\Lambda'$.
    Consider the map 
    \[\varphi\:\Lambda/\pi\Lambda\map \Lambda_x/\pi\Lambda_x\] 
    induced by the inclusion $\Lambda\sub\Lambda_x$. By definition, $V_1 = \Lambda'/\pi\Lambda$. Since $\pi\ii\Lambda'\sub \Lambda_x$, it follows that $V_1\sub\ker(\varphi)$. On the other hand, since the image of $v$ is contained in $V_2$ and $\Lambda_x$ is normalised at $v$, $V_2\nsubset\ker(\varphi)$.
    Since $(V_1,V_2)$ is a sharp subquotient, we deduce that $V_1=\ker(\varphi)$. Moreover, since $\varphi^{-1}(\soc(\Lambda_x/\pi\Lambda_x))$ strictly contains $V_1$, it must contain $V_2$. But since $x\in\X_{\max}(\rho)$, by \Cref{thm_maximal_equivalence}, $\soc(\Lambda_x/\pi\Lambda_x)$ is irreducible, so $V_2=\varphi^{-1}(\soc(\Lambda_x/\pi\Lambda_x))$.
    Thus, $\theta_\Lambda(x)=(V_1,V_2)$.    
    \end{proof}



\section{Applications}\label{section_applications}

One of the key applications of invariant lattices, first pioneered by Ribet \cite{Ribet-herbrand}, is to construct interesting representations of the group $G$ over the residue field $\F$. 

Let $\rho\: G\to \GL(V)$ be a representation. Given a stable lattice $\Lambda\sub V$, we can view $\rho$ as a representation $\rho\:G\to \GL(\Lambda)$, and obtain the residual representation $\orho_\Lambda\:G\to \GL(\Lambda/\pi\Lambda)$. In general, the isomorphism class of $\orho_\Lambda$ depends on $\Lambda$, but its semisimplification, $\orho^{ss}$ is independent of all choices. We assume that  $\overline{\rho}^{ss}=\bigoplus_{i=1}^rW_i^{m_i}$, where the $W_i$'s are distinct, irreducible representations of $G$ over $\F$.

\begin{defn}\label{def:realises}
    Let $\alpha\in \Ext_{\F[G]}(W_i, W_j)$ be a class of non-split extensions of $W_i$ by $W_j$, for some $1\le i,j\le r$.  We say that an invariant lattice $\Lambda\sub V$ \emph{realises} $\alpha$ if there exist subrepresentations $U_0\sub U_1\sub \Lambda/\pi\Lambda$ such that $U_1/U_0$ fits into a short exact sequence
    \[0\map W_j\map U_1/U_0\map W_i\map 0\]
    that defines the extension class $\alpha$.
    We say that \emph{$\rho$ realises $\alpha$} if some invariant lattice $\Lambda\subset V$ realises $\alpha$.
\end{defn}

Let $U$ be a representation of $G$ over $\F$, and suppose that it fits into a short exact sequence
    \begin{equation}\label{eq_short_exact}
    0\map W_j\xmap{f}U\xmap{g}W_i\map 0.
    \end{equation} 

Then we denote by $[U,f,g]$ the extension class in $\Ext_{\F[G]}(W_i,W_j)$ defined by \Cref{eq_short_exact}.
In general, the maps $f$ and $g$ are not unique: the map $f$ is only unique up to the action of $\Aut_G(W_{j})$, and the map $g$ is only unique up to the action of $\Aut_G(W_{i})$.

\begin{defn}\label{def:equivalence}
    We say that two extension classes $[U, f, g]$ and $[U', f', g']$ are \emph{equivalent} if $U\cong U'$ and there are automorphisms $\varphi\in \Aut_G(W_{i})$ and $\psi\in \Aut_G(W_{j})$ such that $f = f'\circ\varphi$ and $g = \psi\circ g'$.
\end{defn}

\subsection{Uniqueness of extensions realised by modular representations}


In this subsection, we show that if $U$ is a non-split extension of $W_i$ by $W_j$, for some $1\leq i,j\leq r$, that can be realised by $\rho$, then $U$ can also be realised by a maximal lattice. As a consequence, if $W_i$ and $W_j$ occur in $\orho^{ss}$ with multiplicity one, we prove that $\rho$ can realise at most one non-split extension of $W_i$ by $W_j$, up to equivalence.

\begin{lem}\label{lem_uniqueness}
 Let $1\leq i\leq r$.
     \begin{enumerate}
     \item There exists $x\in \X_{\max}(\rho)$ such that $\soc(\Lambda_x/\pi\Lambda_x)\simeq W_i$.
      \item If $m_i=1$, there is a unique $x\in \X_{\max}(\rho)$ such that $\soc(\Lambda_x/\pi\Lambda_x)\simeq W_i$.
     \end{enumerate}
 \end{lem}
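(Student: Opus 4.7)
The plan is to deploy the surjection $\theta_\Lambda\colon\X_{\max}(\rho)\to\mathrm{sharp}(\Lambda/\pi\Lambda)$ from \Cref{thm_maximal_sharp}. For part $(i)$, I fix any $\rho(G)$-invariant lattice $\Lambda$. Since $W_i$ appears in $\orho^{ss}$, it is a composition factor of $\Lambda/\pi\Lambda$, so there is an irreducible subquotient $(V_1,V_2)$ of $\Lambda/\pi\Lambda$ with $V_2/V_1\simeq W_i$. Applying \Cref{prop_sharp_lying_above} gives a sharp subquotient $(V_1',V_2')$ above it, still satisfying $V_2'/V_1'\simeq W_i$. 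By \Cref{thm_maximal_sharp}, there exists $x\in\X_{\max}(\rho)$ with $\theta_\Lambda(x)=(V_1',V_2')$. Since $\theta_\Lambda(x)=(\ker\varphi,\varphi^{-1}(\soc(\Lambda_x/\pi\Lambda_x)))$, we obtain $\soc(\Lambda_x/\pi\Lambda_x)\simeq V_2'/V_1'\simeq W_i$.

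For part $(ii)$, the key step is a uniqueness lemma: if $m_i=1$, then for any $\rho(G)$-invariant lattice $\Lambda$ with $\bar V=\Lambda/\pi\Lambda$, there is a unique subrepresentation $V_1\sub\bar V$ with $\soc(\bar V/V_1)\simeq W_i$. To prove this, suppose $V_1,V_1'$ both have this property. Since $\bar V/V_1$ contains $W_i$ in its socle and $m_i=1$ in $\bar V$, $W_i$ occurs exactly once as a composition factor of $\bar V/V_1$; the same holds for $\bar V/V_1'$ and for $\bar V/(V_1\cap V_1')$. The kernel $V_1/(V_1\cap V_1')$ of the surjection $\bar V/(V_1\cap V_1')\twoheadrightarrow\bar V/V_1$ therefore has no $W_i$ as a composition factor, and in particular no $W_i$ as a subrepresentation. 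On the other hand, $V_1/(V_1\cap V_1')\cong(V_1+V_1')/V_1'$ embeds into $\bar V/V_1'$, whose socle is $W_i$; since every nonzero subrepresentation of $\bar V/V_1'$ contains this socle, we must have $V_1/(V_1\cap V_1')=0$. Hence $V_1\sub V_1'$, and by symmetry $V_1=V_1'$.

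With the lemma in hand, let $x,y\in\X_{\max}(\rho)$ both have socle $\simeq W_i$. Since any two lattices are commensurable, I may rescale within the homothety classes so that $\Lambda_y\sub\Lambda_x$ and $\Lambda_y\not\sub\pi\Lambda_x$. Taking $\Lambda=\Lambda_y$ as the base lattice, both $\theta_{\Lambda_y}(x)$ and $\theta_{\Lambda_y}(y)$ are sharp subquotients of $\Lambda_y/\pi\Lambda_y$ whose quotient is isomorphic to $W_i$ — the former via the map $\varphi\colon\Lambda_y/\pi\Lambda_y\to\Lambda_x/\pi\Lambda_x$, and the latter equals $(0,\soc(\Lambda_y/\pi\Lambda_y))$ by taking $\Lambda_y$ itself as the representative of $y$. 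The uniqueness lemma forces these two sharp subquotients to coincide; in particular $\ker\varphi=0$, so $\varphi$ is an injection of $\F$-vector spaces of equal dimension, hence an isomorphism. This yields $\Lambda_x=\Lambda_y+\pi\Lambda_x$, and Nakayama's lemma applied to the finitely generated $\O_K$-module $\Lambda_x/\Lambda_y$ gives $\Lambda_x=\Lambda_y$, i.e.\ $x=y$.

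The main obstacle is the uniqueness lemma for sharp subquotients: \Cref{thm_maximal_sharp} provides only surjectivity of $\theta_\Lambda$, so the multiplicity-one hypothesis is essential to collapse the fibre over the sharp subquotient with quotient $W_i$ to a single point. Once the lemma is in place, the remainder of the argument is a clean comparison of two lattices via Nakayama.
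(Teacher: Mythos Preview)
Your proof is correct. Part $(i)$ is word-for-word the paper's argument.

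For part $(ii)$, you reach the same conclusion by a longer route. The paper argues directly: scale so that $\Lambda_1\sub\Lambda_2$ with $\Lambda_1\not\sub\pi\Lambda_2$, and look at the induced map $\varphi\colon\Lambda_1/\pi\Lambda_1\to\Lambda_2/\pi\Lambda_2$. Since $\varphi$ is nonzero its image contains $\soc(\Lambda_2/\pi\Lambda_2)\simeq W_i$, so by $m_i=1$ the kernel has no composition factor $W_i$; but $\soc(\Lambda_1/\pi\Lambda_1)\simeq W_i$ forces every nonzero subrepresentation of $\Lambda_1/\pi\Lambda_1$ to contain $W_i$, hence $\ker\varphi=0$ and $\Lambda_1=\Lambda_2$. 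Your argument packages this same idea inside the $\theta_\Lambda$ formalism: you first prove a standalone lemma that, when $m_i=1$, the subrepresentation $V_1\sub\bar V$ with $\soc(\bar V/V_1)\simeq W_i$ is unique, and then apply it with $\bar V=\Lambda_y/\pi\Lambda_y$ to the sharp subquotients $\theta_{\Lambda_y}(x)$ and $\theta_{\Lambda_y}(y)=(0,\soc(\Lambda_y/\pi\Lambda_y))$. This is valid---indeed, for a sharp subquotient $(V_1,V_2)$ one has $\soc(\bar V/V_1)=V_2/V_1$, so your lemma applies---but it is the paper's argument specialised to $V_1'=0$ and then re-expressed through $\theta_\Lambda$. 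The paper's version is shorter because it uses the socle hypothesis on $\Lambda_1/\pi\Lambda_1$ directly rather than via an auxiliary uniqueness statement; on the other hand, your lemma isolates a cleaner general fact (uniqueness of the sharp subquotient with quotient $W_i$ when $m_i=1$) that could be reused elsewhere.
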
 

     \begin{proof}
         $(i)$. Choose any invariant lattice $\Lambda$.
         Since $W_i$ appears in the semi-simplification of $\Lambda/\pi\Lambda$, there exists an irreducible subquotient $(V_1,V_2)$ of $\Lambda/\pi\Lambda$ such that $V_2/V_1\simeq W_i$.
         By \Cref{prop_sharp_lying_above}, there exists a sharp subquotient $(V_1',V_2')$ lying above $(V_1,V_2)$, and $V_2'/V_1'\simeq W_i$.
         By \Cref{thm_maximal_sharp}, there exists a maximal vertex $x\in\X_{\max}(\rho)$ such that $\theta_\Lambda(x)=(V_1',V_2')$.
         From the definition of $\theta_\Lambda$ it follows that $\soc(\Lambda_x/\pi\Lambda_x)\simeq V_2'/V_1'\simeq W_i$.

         $(ii)$. Let $\Lambda_1,\Lambda_2$ be two invariant lattices such that $\soc(\Lambda_1/\pi\Lambda_1)\simeq\soc(\Lambda_2/\pi\Lambda_2)\simeq W_i$.
         By scaling, we may assume that  $\Lambda_1\subset \Lambda_2$, and $\Lambda_1\nsubset \pi\Lambda_2$.
         The map
         \[\varphi\:\Lambda_1/\pi\Lambda_1\map \Lambda_2/\pi\Lambda_2.\]
         induced by the inclusion $\Lambda_1\subset \Lambda_2$ is nonzero, because $\Lambda_1\nsubset \pi\Lambda_2$.
         Thus, the image of $\varphi$ contains the socle of $\Lambda_2/\pi\Lambda_2$.
         It follows that $(\Lambda_1/\pi\Lambda_1)/\ker(\varphi)$ contains a subrepresentation isomorphic to $W_i$.
         Since $m_i=1$, the kernel of $\varphi$ does not contain a subrepresentation isomorphic to $W_i$.
         Therefore, $\ker(\varphi)=0$, so $\Lambda_1=\Lambda_2$.
     \end{proof}

\begin{thm}\label{thm_ext_classes_by_maximal_lattices}
    Suppose that $\rho$ realises a non-split extension class $\alpha=[U,f,g] \in \Ext_{\F[G]}(W_i, W_j)$. Then there is a {maximal} lattice $\Lambda_x$ such that $U$ is isomorphic to a subrepresentation of $\Lambda_x/\pi\Lambda_x$.
    In particular, the extension class $\alpha$ is realized by $\Lambda_x$.
\end{thm}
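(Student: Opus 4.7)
The plan is to combine the structural results on sharp subquotients (\Cref{prop_sharp_lying_above} and \Cref{thm_maximal_sharp}) with the rigid submodule structure of a non-split extension of two simple representations. By hypothesis, there is a $\rho(G)$-stable lattice $\Lambda$ and subrepresentations $U_0\sub U_1\sub \Lambda/\pi\Lambda$ such that $U_1/U_0$ fits into a short exact sequence $0\to W_j\xrightarrow{f} U_1/U_0\xrightarrow{g} W_i\to 0$ defining $\alpha$. First I would let $U'\sub U_1$ be the preimage of $f(W_j)$, so that $U_0\sub U'\sub U_1$ and $U'/U_0\simeq W_j$; thus $(U_0, U')$ is an irreducible subquotient of $\Lambda/\pi\Lambda$.

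Next, applying \Cref{prop_sharp_lying_above} produces a sharp subquotient $(V_1', V_2')$ of $\Lambda/\pi\Lambda$ lying above $(U_0, U')$, so that $U'\sub V_2'$ and $U' \cap V_1' = U_0$; in particular, $U_0\sub V_1'$. Then, by \Cref{thm_maximal_sharp}, there exists a maximal vertex $x\in\X_{\max}(\rho)$ with $\theta_\Lambda(x) = (V_1', V_2')$. Choosing the representative $\Lambda_x\supset\Lambda$ with $\Lambda\nsubset\pi\Lambda_x$, the induced reduction map $\varphi\:\Lambda/\pi\Lambda\to \Lambda_x/\pi\Lambda_x$ has kernel $V_1'$. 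The claim to verify is that $\varphi$ restricts to an injection on $U_1$ with kernel exactly $U_0$. Once this is established, $\varphi(U_1)\sub \Lambda_x/\pi\Lambda_x$ is a subrepresentation isomorphic to $U_1/U_0\simeq U$, and taking $U_0''=0$ and $U_1''=\varphi(U_1)$ inside $\Lambda_x/\pi\Lambda_x$ witnesses $\Lambda_x$ as realising $\alpha$.

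The main obstacle, and the crux of the argument, is the verification that $U_1\cap V_1' = U_0$. Since $U_0\sub U_1\cap V_1'$, the quotient $(U_1\cap V_1')/U_0$ is a subrepresentation of $U = U_1/U_0$. Because $U$ is a non-split extension of the simple representations $W_i$ by $W_j$, its socle is precisely $U'/U_0\simeq W_j$, and hence its only proper submodules are $0$ and $U'/U_0$. Thus $U_1\cap V_1'\in\{U_0, U', U_1\}$. The case $U_1\cap V_1' = U_1$ would give $U'\sub U_1\sub V_1'$, whence $U_0 = U'\cap V_1' = U'$, contradicting $U'/U_0\simeq W_j\neq 0$. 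The case $U_1\cap V_1' = U'$ likewise forces $U'\sub V_1'$ and yields the same contradiction. Therefore $U_1\cap V_1' = U_0$, as required. Beyond this case analysis, the proof is a direct application of the machinery of sharp subquotients already developed in \Cref{sec:max-vertices}.
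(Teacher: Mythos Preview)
Your proof is correct and follows essentially the same route as the paper: both pick the irreducible subquotient corresponding to the copy of $W_j$ inside the realised extension, enlarge it to a sharp subquotient via \Cref{prop_sharp_lying_above}, produce a maximal lattice via \Cref{thm_maximal_sharp}, and then use that the submodule lattice of a non-split length-two extension has exactly three elements to check that the kernel of the reduction map meets $U_1$ precisely in $U_0$. One tiny wording slip: you say ``$\varphi$ restricts to an injection on $U_1$ with kernel exactly $U_0$''---you mean that $\varphi|_{U_1}$ has kernel $U_0$, so that the induced map on $U_1/U_0$ is injective.
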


    \begin{proof}
    Let $\Lambda$ be an invariant lattice that realises $\alpha=[U,f,g]$.
    Then there exist subrepresentations $V_1\sub V_2\sub V_3\sub \Lambda/\pi\Lambda$ such that $V_3/V_1\simeq U$, $V_2/V_1\simeq W_{j}$, and $V_3/V_2\simeq W_{i}$.
    The subquotient $(V_1,V_2)$ is irreducible, so by \Cref{prop_sharp_lying_above}, there exists a sharp subquotient $(V_1',V_2')$ lying over $(V_1,V_2)$.
    By definition, $V_1'\cap V_2=V_1$. 
    Moreover, we have $V_1'\cap V_3=V_1$. 
    Indeed, $V_1'\cap V_3$ is a subrepresentation of $V_3$ containing $V_1$.
    There are only three such subrepresentations, namely $V_1,V_2,$ and $V_3$.
    Since $V_1'$ does not contain $V_2$, we must have $V_1'\cap V_3=V_1$.
    
     
    By \Cref{thm_maximal_sharp}, the map $\theta_\Lambda$ is surjective.
    Thus, there exists a maximal lattice $\Lambda_x$ such that $\theta_\Lambda(x)=(V_1',V_2')$.
    By definition, the kernel of the map
    \[\Lambda/\pi\Lambda\xrightarrow{\eta}\Lambda_x/\pi\Lambda_x\]
    is $V_1'$, and $V_2'$ is the preimage of $\soc(\Lambda_x/\pi\Lambda_x)$.
    Let $U'=\eta(V_3)$, a subrepresentation of $\Lambda_x/\pi\Lambda_x$.
    Then since $V_1'\cap V_3=V_1$, we have $U'\simeq V_3/V_1\simeq U$.
    \end{proof} 

\begin{proof}[Proof of Theorem $\ref{thm:uniqueness}$]
    Suppose that $\rho$ realises two extension classes $[U,f,g]$ and $[U',f',g']\in \Ext_{\F[G]}(W_i, W_j)$. By \Cref{thm_ext_classes_by_maximal_lattices}, there are maximal vertices $x, x'$ such that $U$ is a isomorphic to a subrepresentation of $\Lambda_x/\pi\Lambda_x$, and $U'$ is isomorphic to a subrepresentation of $\Lambda_{x'}/\pi\Lambda_{x'}$. By \Cref{thm_maximal_equivalence}, the socles of $\Lambda_x/\pi\Lambda_x$ and $\Lambda_{x'}/\pi\Lambda_{x'}$ are irreducible. Hence, both socles must be isomorphic to $\soc U \simeq \soc U' \simeq W_j$. 
    By \Cref{lem_uniqueness}, and since $m_j=1$, we have $x = x'$.
    Thus, there are subrepresentations $V,V'\subset\Lambda_x/\pi\Lambda_x$ such that $V\simeq U$ and $V'\simeq U'$.

    Now, since $W_i$ occurs with multiplicity one in the Jordan--Holder decomposition of $\Lambda_x/\pi\Lambda_x$, it follows that $V=V'$, hence $U\simeq U'$.
    This means that the classes $[U,f,g]$ and $[U',f',g']$ differ by the action described in \Cref{def:equivalence}.
\end{proof}

\begin{remark}
    By duality, for each $i = 1, \ldots, r$, there exists a lattice $\Lambda$ such that $\mathrm{cosoc}(\Lambda/\pi\Lambda)\simeq W_i$. Indeed, applying \Cref{lem_uniqueness} to the dual representation $\rho\dual$, there is a maximal lattice $L\subset V\dual$ such that $\soc(L/\pi L)\simeq W_i\dual$. We can then take $\Lambda$ to be the dual lattice $L\dual =\{v\in V:f(v)\in\O_K \text{ for all } f\in L\}$ of $L$. See \cite{Thompson}*{Thm.\ 1} for a related result in the case that $G$ is a finite group.
\end{remark}

\subsection{On Bellaïche's generalisation of Ribet's Lemma}

In this section, we prove \Cref{thm:bellaiche-generalisation}. We first define the socle filtration of a representation of $G$ over $\F$.

\begin{defn}\label{def:socle}
    If $\overline V$ is a finite dimensional representation of $G$ over the residue field $\F$, then the socle filtration of $\overline V$ is the filtration
\[0 =\soc^0(\overline V)\subsetneq\soc^1(\overline V)\subsetneq \soc^2(\overline V)\subsetneq\cdots\subsetneq\soc^{\l}(\overline V) = \overline V,\]
where, for each $i\ge 1$,
\[\frac{\soc^i(\overline V)}{\soc^{i-1}(\overline V)} = \soc\br{\frac{\overline V}{\soc^{i-1}(\overline V))}}.\]
In particular, $\soc^1(\overline V) = \soc(\overline V)$. 
\end{defn}

We say that a subrepresentation $W$ of $\overline V^{ss}$ occurs in the $i$-th level of the socle filtration if $W$ is a subrepresentation of $\frac{\soc^i(\overline V)}{\soc^{i-1}(\overline V)}$.

\begin{lem}\label{lem_socle_filtration}
Let $\overline V$ be a representation of $G$ over $\F$. Let $W$ be a Jordan--Holder factor of $\overline V$ and suppose that $W$ occurs in the $i$-th level of the socle filtration of $\overline V$. Then there exists a Jordan--Holder factor $U$ of $\overline V$ occuring in the $(i-1)$-th level of the socle filtration of $\overline V$, and representations
\[\soc^{i-1}(\overline V)\sub V_1\sub V_2\sub \soc^{i+1}(\overline V)\]
such that $V_2/V_1$ is non-split extension of $W$ by $U$.

\end{lem}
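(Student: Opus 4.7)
The plan is to lift $W$ to a subrepresentation of $\soc^i(\overline V)$, form a natural short exact sequence modulo a lower term of the socle filtration, and then extract from it a non-split extension of $W$ by a single simple $U$ in the $(i-1)$-th level via a pushout. We may assume $i \ge 2$, since there is no $(i-1)$-th level otherwise. The main obstacle is a single non-splitting claim; once that is in hand, the rest is diagram-chasing.

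Since $W$ is a subrepresentation of $\soc^i(\overline V)/\soc^{i-1}(\overline V)$, let $W' \subseteq \soc^i(\overline V)$ denote its preimage, so that $\soc^{i-1}(\overline V) \subseteq W'$ and $W'/\soc^{i-1}(\overline V) \cong W$. I will analyse the short exact sequence
\[0 \to \soc^{i-1}(\overline V)/\soc^{i-2}(\overline V) \to W'/\soc^{i-2}(\overline V) \to W \to 0,\]
whose non-splitting is the key claim. Were it split, the middle term would be a direct sum of the simple $W$ and the semisimple $(i-1)$-th layer, hence semisimple, and so would sit inside $\soc(\overline V/\soc^{i-2}(\overline V)) = \soc^{i-1}(\overline V)/\soc^{i-2}(\overline V)$. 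This would force $W' \subseteq \soc^{i-1}(\overline V)$, contradicting $W'/\soc^{i-1}(\overline V) \cong W \ne 0$.

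Having established non-splitting, decompose the semisimple module $\soc^{i-1}(\overline V)/\soc^{i-2}(\overline V) = \bigoplus_{j=1}^{s} U_j$ into simple summands. Under the identification $\Ext^1_{\F[G]}(W,\bigoplus_j U_j) \cong \bigoplus_j \Ext^1_{\F[G]}(W,U_j)$, the nonzero extension class has some nonzero component $\xi_{j_0}$; set $U = U_{j_0}$, which is a Jordan--Holder factor of $\overline V$ occurring in the $(i-1)$-th level. Let $V_1 \subseteq W'$ be the preimage of $\bigoplus_{j \ne j_0} U_j$ under $W' \twoheadrightarrow W'/\soc^{i-2}(\overline V)$, and let $V_2 = W'$. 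Then $V_2/V_1$ is precisely the pushout of the sequence above along the projection onto $U_{j_0}$, so it represents the class $\xi_{j_0}$ and is therefore a non-split extension of $W$ by $U$. By construction the required inclusions of $V_1$ and $V_2$ with respect to the socle filtration hold, completing the proof.
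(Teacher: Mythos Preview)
Your proof is correct and follows the same approach as the paper's: lift $W$ to its preimage in $\soc^i(\overline V)$, note that the resulting extension by the $(i-1)$-th socle layer is non-split (otherwise the preimage would be semisimple and hence lie in $\soc^{i-1}$), and then project onto a single simple summand to extract a non-split length-two extension. The only cosmetic difference is that the paper phrases this last step via the injection $V_2\hookrightarrow\bigoplus_j V_2/U_j'$ (so some factor must fail to be semisimple) rather than via the direct-sum decomposition of $\Ext^1$; these are equivalent.

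One remark on your closing sentence: your $V_1$ satisfies $\soc^{i-2}(\overline V)\subseteq V_1\subsetneq\soc^{i-1}(\overline V)$, not $\soc^{i-1}(\overline V)\subseteq V_1$ as the lemma literally asserts. But the literal inclusion is in fact incompatible with $V_2/V_1$ having a Jordan--H\"older constituent $U$ from level $i-1$, so the statement carries an indexing slip; the paper's own proof (whose reduction ``quotient by $\soc^{i-1}$, assume $i=1$'' is itself off by one) produces exactly the same inclusions you obtain.
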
 
    
    \begin{proof}
        Quotienting by $\soc^{i-1}(\overline V)$, we may assume that $i = 1$. Let $V_2$ be the subrepresentation of $\soc^{2}(\overline V)$ such that $V_2/\soc(\overline V)\simeq W$. Write
        \[\soc(\overline V) = \bigoplus_{j=1}^nU_j\]
        for the decomposition of $\soc(\overline V)$ into irreducible subrepresentations and, for each $j = 1, \ldots, n$, let
        \[U_j' = \bigoplus_{k\ne j}U_k.\]
        Since $\bigcap_{j=1}^nU_j' = 0$, the map
        \[V_2\to\bigoplus_{j=1}^nV_2/U_j'\]
        is injective. Since $V_2$ is not semisimple, there exists some $j$ such that $V_2/U_j'$ is not semisimple.

        So set $V_1 = U_j'$ and $U=U_j$. Then $0\sub V_1\sub V_2\sub\soc^2(\overline V)$ and there is a non-split short exact sequence
        \[0\to\frac{\soc(\overline V)}{V_1}\to \frac{V_2}{V_1}\to \frac{V_2}{\soc\overline V}\to 0.\]
        By construction, we have $\frac{\soc(\overline V)}{V_1}\simeq U$ and $\frac{V_2}{\soc\overline V}\simeq W$.
        
    \end{proof} 

\begin{proof}[Proof of Theorem $\ref{thm:bellaiche-generalisation}$]
    
    The existence of $\Lambda_i$ is exactly \Cref{lem_uniqueness}$(i)$.
    For the remaining assertion, we proceed by induction on $m$. If $m=1$, then $i=j$ and there is nothing to prove.
    Write $\overline V = \Lambda_{i}/\pi\Lambda_{i}$ and suppose that $W_j$ occurs in the $m$-th level of the socle filtration of $\overline V$ with $m\ge 2$. Then, by \Cref{lem_socle_filtration}, there exists an integer $k$ such that $W_k$ occurs in the $(m-1)$-th level of the socle filtration, and $\rho$ realises a non-split extension of $W_j$ by $W_k$. Hence, there is a directed edge from $W_k$ to $W_j$, and the result follows by induction.       
\end{proof}

\subsection{Geometry of maximal vertices}\label{sec:suh}

In \cite{suh2023stable}, Suh studies the extremal vertices of $\X(\rho)$, and shows two 
abundancy results: a lower bound on the number of extremal vertices, and a geometric result that says that $\X(\rho)$ is the convex hull of its extremal vertices.

The goal of this section is to prove stronger versions of these two results by re-framing them in terms of maximal lattices. We first note that every maximal lattice is an extremal lattice. We then give a lower bound on the number of maximal lattices, which is already stronger than the lower bound given in \cite{suh2023stable}*{Thm.\ 1.1.2}. Finally, we show that $\X(\rho)$ is the \emph{tropical} convex hull of the set of maximal vertices.

Recall from \Cref{def:extremal} that an extremal vertex of $\X(\rho)$ is a vertex $x$ such that for all $y,z\in \X(\rho)$, if $x$ lies on the geodesic $[yz]$, then $x=y$ or $x=z$. We write $\X_{\ext}(\rho)$ for the set of extremal vertices of $\X(\rho)$.

\begin{prop}\label{prop_maximal_is_extremal}
Any maximal vertex $x\in \X_{\max}(\rho)$ is an extremal vertex of $\X(\rho)$.
\end{prop}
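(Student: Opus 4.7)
The plan is to reduce the claim to a purely module-theoretic statement by chaining together the two characterisations already established in the paper: Theorem \ref{thm_maximal_equivalence} tells us that $x$ is maximal if and only if $\soc(\Lambda_x/\pi\Lambda_x)$ is irreducible, while the Serre/Suh proposition just after \Cref{def:extremal} tells us that $x$ is extremal if and only if $\Lambda_x/\pi\Lambda_x$ is indecomposable. Hence it suffices to prove the general fact that a finite-length $\F[G]$-module with irreducible socle is indecomposable.

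First I would pick a representative $\Lambda_x$ for $x$ and a nonzero $v\in V$ witnessing its maximality, and apply Theorem \ref{thm_maximal_equivalence} to conclude that $\soc(\Lambda_x/\pi\Lambda_x)$ is an irreducible $G$-representation over $\F$. Next, arguing by contradiction, I would suppose there is a nontrivial $\F[G]$-module decomposition
\[
\Lambda_x/\pi\Lambda_x \;=\; M_1 \oplus M_2
\]
with both $M_i$ nonzero. Since the socle functor commutes with finite direct sums, one has $\soc(\Lambda_x/\pi\Lambda_x) = \soc(M_1)\oplus\soc(M_2)$, and since each $M_i$ is a nonzero finite-dimensional representation it contains a minimal nonzero submodule, so $\soc(M_i)\neq 0$. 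This expresses the socle of $\Lambda_x/\pi\Lambda_x$ as a direct sum of two nonzero subrepresentations, contradicting irreducibility.

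To finish, I would invoke the Serre/Suh proposition to translate the indecomposability of $\Lambda_x/\pi\Lambda_x$ into the extremality of $x$ in $\X(\rho)$.

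There is essentially no obstacle here: the only non-formal ingredients are that the socle commutes with finite direct sums and that every nonzero finite-dimensional $\F[G]$-module has a nonzero socle, both standard. The substance of the proposition is really carried by Theorem \ref{thm_maximal_equivalence}, which has already done the work of identifying maximality with a socle condition.
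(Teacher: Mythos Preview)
Your proposal is correct and follows exactly the paper's approach: invoke \Cref{thm_maximal_equivalence} to get an irreducible socle, deduce indecomposability, and then apply the Serre/Suh criterion. The paper compresses the middle step (irreducible socle $\Rightarrow$ indecomposable) into a single clause, whereas you spell it out, but the argument is the same.
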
 

    \begin{proof}
    Let $x\in \X_{\max}(\rho)$ and let $\Lambda_x$ be a representative lattice.
    By \Cref{thm_maximal_equivalence}, $\Lambda_x/\pi\Lambda_x$ is indecomposable, and it follows from \cite{suh2023stable}*{Proposition 1.1.1} that $x$ is an extremal vertex.
    \end{proof} 

In \cite{suh2023stable}*{Thm.\ 1.1.2}, Suh shows that $|\X_{\ext}(\rho)| \ge \min(3, 1 + \dim(\X(\rho))$. We prove the following strengthening of this result.

\begin{thm}\label{thm_maximal_vertices_lower_bound}
We have 
\[\abs{\X_{\max}(\rho)}\geq 1+\dim(\X(\rho)).\]
with equality if $\rho$ is residually multiplicity free. In particular, we have $\abs{\X_{\ext}(\rho)}\ge 1+\dim(\X(\rho))$.
\end{thm}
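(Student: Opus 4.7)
The plan is to prove that $|\X_{\max}(\rho)| \geq \ell = 1 + \dim\X(\rho)$, where $\ell := \sum_{i=1}^r m_i$ is the length of $\orho^{ss}$ as a $G$-representation, and then to obtain equality in the residually multiplicity free case from the uniqueness part of \Cref{lem_uniqueness}. This amounts to proving a lower bound on $|\X_{\max}(\rho)|$ together with an exact calculation of $\dim\X(\rho)$.

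The lower bound on $|\X_{\max}(\rho)|$ is an immediate consequence of the sharp-subquotient machinery of \Cref{sec:max-vertices}. Fix any $\rho(G)$-invariant lattice $\Lambda$; since the Jordan--H\"older constituents of $\Lambda/\pi\Lambda$ agree with those of $\orho^{ss}$, the $G$-module $\Lambda/\pi\Lambda$ has length $\ell$. Then \Cref{thm_sharp_bound_length} gives $|\mathrm{sharp}(\Lambda/\pi\Lambda)| \geq \ell$, and the surjectivity of $\theta_\Lambda \colon \X_{\max}(\rho) \to \mathrm{sharp}(\Lambda/\pi\Lambda)$ proved in \Cref{thm_maximal_sharp} yields $|\X_{\max}(\rho)| \geq \ell$.

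For $\dim\X(\rho) = \ell - 1$, I argue both inequalities. For the upper bound, any $k$-simplex in $\X(\rho)$ is witnessed by a chain $\pi\Lambda_{x_0} \subsetneq \Lambda_{x_1} \subsetneq \cdots \subsetneq \Lambda_{x_k} \subsetneq \Lambda_{x_0}$ of $\rho(G)$-invariant lattices; reducing modulo $\pi\Lambda_{x_0}$ produces a strict chain of $k+2$ distinct $G$-submodules of $\Lambda_{x_0}/\pi\Lambda_{x_0}$, forcing $k+1 \leq \ell$. For the lower bound, I pick a composition series $0 = \bar V_0 \subsetneq \bar V_1 \subsetneq \cdots \subsetneq \bar V_\ell = \Lambda/\pi\Lambda$ and pull it back: letting $\Lambda'_i$ be the preimage of $\bar V_{\ell-i}$ in $\Lambda$ yields the chain $\pi\Lambda = \Lambda'_\ell \subsetneq \Lambda'_{\ell-1} \subsetneq \cdots \subsetneq \Lambda'_1 \subsetneq \Lambda'_0 = \Lambda$ of $\rho(G)$-invariant lattices, whose $\ell$ homothety classes $[\Lambda], [\Lambda'_1], \ldots, [\Lambda'_{\ell-1}]$ are pairwise distinct (a quick check using $\pi\Lambda \subsetneq \Lambda'_{\ell-1}$) and form an $(\ell-1)$-simplex in $\X(\rho)$.

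Combining these bounds gives $|\X_{\max}(\rho)| \geq \ell = 1 + \dim\X(\rho)$. For equality in the residually multiplicity free case, \Cref{thm_maximal_equivalence} shows that every maximal vertex $x$ has irreducible socle isomorphic to some $W_i$; since $m_i = 1$ for all $i$, \Cref{lem_uniqueness}$(ii)$ ensures $x$ is uniquely determined by this $W_i$, so $|\X_{\max}(\rho)| \leq r = \ell$ and equality holds. I do not anticipate a substantive obstacle: the conceptual core was already carried out in \Cref{sec:max-vertices}, and the dimension calculation is a standard lift-the-composition-series argument.
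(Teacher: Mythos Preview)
Your proposal is correct and follows essentially the same route as the paper: the lower bound via $|\X_{\max}(\rho)| \geq |\mathrm{sharp}(\Lambda/\pi\Lambda)| \geq \ell$ from \Cref{thm_maximal_sharp} and \Cref{thm_sharp_bound_length}, and the equality in the multiplicity-free case via \Cref{lem_uniqueness}$(ii)$, are exactly what the paper does. The only difference is that the paper outsources the identity $1+\dim\X(\rho)=\sum_i m_i$ to \cite{Bellaiche-apropos}*{Prop.\ 3.3.2} whereas you supply the (standard) direct argument; also, you should add one sentence for the ``in particular'' clause, which follows immediately from \Cref{prop_maximal_is_extremal} since $\X_{\max}(\rho)\subseteq\X_{\ext}(\rho)$.
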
 
    
    \begin{proof}
    By \Cref{prop_maximal_is_extremal}, we have $\abs{\X_{\ext}(\rho)}\geq \abs{X_{\max}(\rho)}$, so it suffices to prove the first statement.

    By \cite{Bellaiche-apropos}*{Prop.\ 3.3.2}, we have
    \[1+\dim(\X(\rho))=\sum_{i=1}^rm_i,\]
  where $\overline{\rho}^{ss}=\bigoplus_{i=1}^rW_i^{m_i}$.
  
    Let $\Lambda$ be an invariant lattice.
    Then the length of the representation $\Lambda/\pi\Lambda$ is $\sum_{i=1}^rm_i$.
    By \Cref{thm_maximal_sharp} and \Cref{thm_sharp_bound_length}, we have
    \[\abs{\X_{\max}(\rho)}\geq \abs{\mathrm{sharp}(\Lambda/\pi\Lambda)}\geq \sum_{i=1}^rm_i=1+\dim(\X(\rho)).\]
    Finally, assume that $\rho$ is residually multiplicity free, i.e.\ that $m_i = 1$ for all $i$.
    Then $1+\dim(\X(\rho))=\sum_{i=1}^rm_i=r$, so by \Cref{lem_uniqueness}, the number of maximal vertices is exactly $r$.
    \end{proof}

Next, we show that the invariant building $\X(\rho)$ is, in a sense, spanned by the maximal vertices, and that the maximal vertices are a minimal spanning set.

\begin{defn}[\cite{Develin2004}]
    We say that a simplicial subcomplex $S\sub\X(V)$ is \emph{tropically convex} if, for every two lattices $\Lambda_1,\Lambda_2$ that represent vertices in $S$, the intersection $\Lambda_1\cap\Lambda_2$ also represents a vertex in $S$.
\end{defn}

\begin{prop}
    Both $\X(V)$ and $\X(\rho)$ are tropically convex.
\end{prop}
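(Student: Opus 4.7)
The plan is to reduce both assertions to the single lemma that the intersection $\Lambda_1\cap\Lambda_2$ of two lattices in $V$ is again a lattice in $V$. For $\X(V)$, this is precisely the tropical convexity condition. For $\X(\rho)$, I would additionally observe that if $\Lambda_1$ and $\Lambda_2$ are $\rho(G)$-stable $\O_K$-submodules of $V$, then so is their intersection (being $G$-invariant is preserved under intersection of submodules); hence $\Lambda_1\cap\Lambda_2$ is a $\rho(G)$-invariant lattice and therefore represents a vertex of $\X(\rho)$ by \Cref{def:invariant subcomplex}.

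To prove the key lemma, I would verify the three defining properties of a lattice. First, since $\O_K$ is a discrete valuation ring and in particular Noetherian, the $\O_K$-submodule $\Lambda_1\cap\Lambda_2$ of the finitely generated module $\Lambda_1$ is itself finitely generated. Second, it is torsion-free as a submodule of the $K$-vector space $V$, so by the structure theorem for finitely generated modules over a DVR it is free of some finite rank. Third, to see that the rank equals $d=\dim_K V$, I would invoke commensurability: for any $v\in V$, there exist integers $n,m$ such that $\pi^n v\in\Lambda_1$ and $\pi^m v\in\Lambda_2$, so $\pi^{\max(m,n)}v\in\Lambda_1\cap\Lambda_2$. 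Hence $(\Lambda_1\cap\Lambda_2)\otimes_{\O_K}K=V$, giving the required full rank.

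There is no serious obstacle: the whole argument is formal once the definition of a lattice is unpacked, and the ring-theoretic facts used (Noetherianity of $\O_K$ and the structure of finitely generated torsion-free modules over a DVR) are standard. The only point worth flagging is that ``represents a vertex'' is a condition on homothety classes, so one should note that any lattice automatically determines such a class; hence once $\Lambda_1\cap\Lambda_2$ is shown to be a lattice, it represents a vertex in the relevant subcomplex without further work.
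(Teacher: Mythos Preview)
Your proposal is correct and follows exactly the paper's approach: the paper's proof is simply that the intersection of two lattices is a lattice, and the intersection of two invariant lattices is invariant. You have merely spelled out the standard commutative-algebra details behind the first assertion, which the paper leaves implicit.
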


\begin{proof}
    The tropical convexity of $\X(V)$ is clear, since the intersection of any two lattices is a lattice. Similarly, if $\Lambda_1, \Lambda_2$ are invariant lattices, then so is $\Lambda_1\cap\Lambda_2$.
\end{proof}

\begin{defn}\label{def:tropical}
    Let $S\sub \X(V)$ be a subset of the vertices of $\X(V)$. The \emph{tropical convex hull} of $S$ is the intersection of all the tropically convex subsets of $\X(V)$ containing $S$.
\end{defn}


\begin{thm}\label{thm:trop-convex}
The tropical convex hull of $\X_{\max}(\rho)$ is $\X(\rho)$.
\end{thm}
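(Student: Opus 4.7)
The plan is to prove the two containments separately. Since $\X(\rho)$ is tropically convex by the preceding proposition and contains every maximal vertex, the tropical convex hull $T$ of $\X_{\max}(\rho)$ is automatically contained in $\X(\rho)$. For the reverse inclusion, the idea is to take an arbitrary $x \in \X(\rho)$ represented by an invariant lattice $\Lambda$ and exhibit finitely many maximal invariant lattices whose intersection is exactly $\Lambda$; since $T$ is closed under binary, and hence finite, intersections of representatives, this will place $x$ in $T$.

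The building blocks come from \Cref{lem_finite_sequence}: for each $v \in \Lambda \setminus \pi\Lambda$, fix a maximal invariant lattice $\Lambda^{(v)} \supseteq \Lambda$ that is normalised at $v$. The first step is to establish the (possibly infinite) intersection identity
\[
\Lambda = \bigcap_{v \in \Lambda \setminus \pi\Lambda} \Lambda^{(v)}.
\]
The containment $\subseteq$ is immediate. For the reverse, suppose $w$ lies in every $\Lambda^{(v)}$ but not in $\Lambda$; let $k \ge 1$ be minimal with $\pi^k w \in \Lambda$ and set $v := \pi^k w$. Minimality of $k$ forces $v \notin \pi\Lambda$, so $v \in \Lambda \setminus \pi\Lambda$ and $\Lambda^{(v)}$ is defined. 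Then $w \in \Lambda^{(v)}$ yields $v = \pi^k w \in \pi^k \Lambda^{(v)} \subseteq \pi\Lambda^{(v)}$, contradicting the fact that $\Lambda^{(v)}$ is normalised at $v$.

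The main obstacle is cutting this possibly infinite intersection down to a finite one. My plan is to pick any $v_0 \in \Lambda \setminus \pi\Lambda$ together with an integer $n_0 \ge 0$ satisfying $\pi^{n_0}\Lambda^{(v_0)} \subseteq \Lambda$, and then greedily build finite subsets $S_0 := \{v_0\} \subsetneq S_1 \subsetneq \cdots$ of $\Lambda \setminus \pi\Lambda$: at each stage, if $L_{S_i} := \bigcap_{v \in S_i} \Lambda^{(v)}$ strictly contains $\Lambda$, choose any $w \in L_{S_i} \setminus \Lambda$ and adjoin the vector $v := \pi^k w$ produced by applying the previous paragraph to $w$, so that $L_{S_{i+1}} = L_{S_i} \cap \Lambda^{(v)} \subsetneq L_{S_i}$. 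Termination is the heart of the argument: the quotients $L_{S_i}/\Lambda$ form a strictly descending chain of submodules of the finitely generated $\O_K/\pi^{n_0}$-module $\Lambda^{(v_0)}/\Lambda$. Since $\O_K/\pi^{n_0}$ is Artinian (Noetherian with a unique, nilpotent maximal ideal), this module has finite length, and so the chain must stabilise at some stage $n$ with $L_{S_n} = \Lambda$. This yields the desired finite decomposition $\Lambda = \Lambda^{(v_0)} \cap \cdots \cap \Lambda^{(v_n)}$ as an intersection of representatives of maximal vertices, placing $x$ in $T$.
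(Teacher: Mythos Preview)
Your proof is correct and follows essentially the same approach as the paper: exhibit an arbitrary invariant lattice $\Lambda$ as a finite intersection of maximal lattices containing it, then invoke closure of the tropical convex hull under finite intersections. The paper indexes the maximal lattices by the set $\X_{\max}(\rho)$ (taking the representative $\Lambda_x$ with $\Lambda\subseteq\Lambda_x$, $\Lambda\nsubseteq\pi\Lambda_x$) rather than by vectors $v\in\Lambda\setminus\pi\Lambda$, proves the intersection identity by observing that the induced map $\Lambda/\pi\Lambda\to L/\pi L$ is injective (so $\Lambda=L$ by Nakayama), and extracts finiteness by the same Artinian argument on $\Lambda_x/\Lambda$; your element-chasing version of the intersection identity and the greedy descending-chain construction are just more explicit renderings of the same ideas.
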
 

    \begin{proof}
    Let $\Lambda$ be a $\rho(G)$-invariant lattice. We will show that $\Lambda$ is an intersection of finitely many maximal lattices.
    
    For each $x\in \X_{\max}(\rho)$, let $\Lambda_x$ denote the unique lattice representing $x$ such that $\Lambda\sub \Lambda_x$, but $\Lambda\not\sub\pi\Lambda_x$.
    Let $L=\bigcap_{x\in\X_{\max}(\rho)}\Lambda_x$.
    We claim that $\Lambda=L$.
    Clearly, $\Lambda\sub L$.
    Let $v$ be a vector such that $\Lambda$ is normalised at $v$.
    Then $\Lambda\in \LL_v$, and there exists a lattice $\Lambda'\in\LL_v$ containing $\Lambda$ and maximal in $\LL_v$.
    Then $\Lambda'$ represents $x$, for some $x\in\X_{\max}(\rho)$.
    In addition, $\Lambda\sub\Lambda'$ and $\Lambda\not\sub\pi\Lambda'$.
    Thus, $\Lambda'=\Lambda_x$.
    It follows that $L$ is also normalised at $v$.
    The map $\Lambda/\pi\Lambda\map L/\pi L$, induced by the inclusion $\Lambda\sub L$, is injective.
    Therefore, $\Lambda=L$. 

    If the residue field $\F$ is infinite, then the set $\X_{\max}(\rho)$ can also be infinite. 
    Thus, it remains to show that $\Lambda = \bigcap_{x\in\X_{\max}(\rho)}\Lambda_x$ is actually a finite intersection. But for any $x\in \X_{\max}(\rho)$, there exists an integer $n$ such that $\pi^n\Lambda_x\sub\Lambda\sub \Lambda_x$, so $\Lambda_x/\Lambda$ is a finitely generated $\O_K/\pi^n\O_K$-module. In particular, $\Lambda_x/\Lambda$ is Artinian, so there is a finite subset $\{x_1, \ldots, x_n\}\sub\X_{\max}(\rho)$ such that $\Lambda = \bigcap_{i=1}^n\Lambda_{x_i}$.
    \end{proof}

\begin{remark}
We expect that the tropical convex hull of a set $S\subset \X(V)$ of vertices is always contained in the simplicial convex hull of $S$.
If that is true, then $\X(\rho)$ is also the simplicial convex hull of $\X_{\max}(\rho)$.
We note that \cite{suh2023stable}*{Thm.\ 1.1.2(1)},  shows that $\X(\rho)$ is the simplicial convex hull of the set of \emph{all} extremal vertices of $\X(\rho)$. Our result suggests that we only need the subset of maximal vertices.
\end{remark}



Finally, we show that $\X_{\max}(\rho)$ is the smallest subset whose tropical convex hull is $\X(\rho)$.

\begin{prop}
    If $S\sub \X(\rho)$ is a subset whose tropical convex hull is $\X(\rho)$, then $\X_{\max}(\rho) \sub S$.
\end{prop}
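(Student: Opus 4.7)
The plan is to show that any maximal vertex $x$ must belong to $S$, by using the maximality to force some lattice appearing in a tropical-intersection representation of $\Lambda_x$ to equal $\Lambda_x$ itself. The preliminary observation will be that a vertex lies in the tropical convex hull of $S$ if and only if some representative of it can be written as a finite intersection $\bigcap_{i=1}^n M_i$, where each $M_i$ is a scaled representative of some vertex in $S$; this reduces to checking that the collection of such vertices is already tropically convex, which is immediate since the intersection of two finite intersections is again a finite intersection.

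Given $x\in\X_{\max}(\rho)$, I would choose $v\in V$ such that $\Lambda_x$ is maximal in $\LL_v$, and, after rescaling $\Lambda_x$ if necessary, write $\Lambda_x = \bigcap_{i=1}^n M_i$ with each $M_i$ representing some $y_i \in S$. Each $M_i$ then contains $\Lambda_x$. The key step uses that multiplication by $\pi$ is injective on $V$, giving the identity
\[\bigcap_{i=1}^n \pi M_i \;=\; \pi\bigcap_{i=1}^n M_i \;=\; \pi\Lambda_x.\]
If $v\in \pi M_i$ for every $i$, this would force $v\in \pi\Lambda_x$, contradicting the normalisation of $\Lambda_x$ at $v$. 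Hence $v\notin \pi M_{i_0}$ for some $i_0$; since $v\in\Lambda_x\subset M_{i_0}$, the lattice $M_{i_0}$ is normalised at $v$, i.e.\ $M_{i_0}\in\LL_v$. Because $M_{i_0}\supset \Lambda_x$ and $\Lambda_x$ is maximal in $\LL_v$, it follows that $M_{i_0} = \Lambda_x$, whence $x = y_{i_0}\in S$.

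I do not anticipate any serious obstacle here: the whole argument is a direct combination of the definition of maximality in $\LL_v$ with the simple algebraic identity displayed above. The only minor technicality is the preliminary reduction of tropical convex hull membership to finite intersections of scaled representatives, which is a routine induction on the number of intersection steps.
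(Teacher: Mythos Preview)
Your argument is correct and essentially identical to the paper's proof: both write $\Lambda_x$ as a finite intersection of (scaled) representatives of vertices in $S$, observe that some $M_{i_0}$ must be normalised at $v$ because $v\notin\pi\Lambda_x=\bigcap\pi M_i$, and conclude $M_{i_0}=\Lambda_x$ by maximality. You are slightly more careful than the paper in justifying the preliminary reduction (that membership in the tropical convex hull yields such a finite-intersection representation), which the paper simply asserts.
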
 
    \begin{proof}
    Let $x\in \X_{\max}(\rho)$ and suppose that $x$ is maximal with respect to $v\in V$.
    Let $\Lambda_x$ be a lattice representing $x$ and normalised at $v$.
    Then there exist $s_1,\ldots,s_n\in S$ and lattices $\Lambda_i$ representing $s_i$ such that 
    \[\Lambda_x=\bigcap_{i=1}^n\Lambda_i.\]
    Since $v\in \Lambda_x\setminus\pi\ii\Lambda_x$, there must exist some $1\leq i\leq n$ such that $\Lambda_i$ is normalised at $v$.
    Then $\Lambda_x\sub \Lambda_i$, and since $\Lambda_x$ is maximal with respect to $v$, we have $\Lambda_x=\Lambda_j$.
    Therefore, $x\in S$.
    \end{proof}



\section*{Acknowledgements}

We are grateful to Ehud de Shalit and Junecue Suh for helpful comments and suggestions.

\bibliographystyle{alpha}
\bibliography{bibliography}
\end{document}